\theoremstyle{definition}
\theoremstyle{remark}
\numberwithin{equation}{section}
\theoremstyle{definition}\newtheorem{thm}{Theorem}[section]
\theoremstyle{definition}\newtheorem{cor}[thm]{Corollary}
\theoremstyle{definition}\newtheorem{lem}[thm]{Lemma}
\theoremstyle{definition}\newtheorem{prop}[thm]{Proposition}
\theoremstyle{definition}
\theoremstyle{definition}
\theoremstyle{remark}
\theoremstyle{definition}
\theoremstyle{definition}
\begin{document}
\title{functionally countable subalgebras and some properties of Banaschewski compactification}

\author{AliReza Olfati}
\address{Department of Mathematics, Yasouj University, Yasouj, Iran}
\email{alireza.olfati@yu.ac.ir, olfati.alireza@gmail.com}

\subjclass[2000]{Primary 54C30; 54C40; 54A25; Secondary 54D40, 54D60, 54G05. }



\keywords{Zero-dimensional space, Strongly zero-dimensional Space,  $\Bbb{N}$-compact space, Banaschewski compactification, Pseudocompact space, Functionally countable subalgebra, Support, Cellularity, Remainder, Almost $P$-space, Parovi$\breve{\mbox{c}}$enko space}

\begin{abstract}
Let $X$ be a zero-dimensional space and $C_c(X)$ be the set of all continuous real valued functions on $X$ with countable image. In this article we denote by $C_c^K(X)$ (resp., $C_c^{\psi}(X))$ the set of all functions in $C_c(X)$ with compact  (resp., pseudocompact) support. First, we observe that $C_c^K(X)=O_c^{\beta_0X\setminus X}$ (resp., $C^{\psi}_c(X)=M_c^{\beta_0X\setminus \upsilon_0X}$). This implies that for an  $\Bbb{N}$-compact space $X$,  the intersection of all free maximal ideals in $C_c(X)$ equals to $C_c^K(X)$, i.e., $M_c^{\beta_0X\setminus X}=C_c^K(X)$. Afterwards, by applying methods of functionally countable subalgebras, we observe some results in remainder of Banaschewski compactification. It is shown that for a zero-dimensional non pseudocompact space $X$, the set $\beta_0X\setminus \upsilon_0X$ has cardinality at least $2^{2^{\aleph_0}}$. Moreover, for a locally compact and $\Bbb{N}$-compact space $X$, the remainder $\beta_0X\setminus X$ is an almost $P$-space. These results leads us to find a class of Parovi$\breve{\mbox{c}}$enko spaces in Banaschewski compactification of a non pseudocompact zero-dimensional space. We conclude with  a theorem which gives a lower bound for the cellularity of subspaces $\beta_0X\setminus \upsilon_0X$ and $\beta_0X\setminus X$, whenever $X$ is a zero-dimensional, locally compact space which is not pseudocompact.

\end{abstract}

\maketitle
\section{Preliminaries}

We recall that a zero-dimensional topological space is a Hausdorff space with a base consisting of clopen sets. Mrowka showed that $X$ is zero-dimensional if and only if it can be embedded into the product space $\Bbb{N}^{\kappa}$, where $\Bbb{N}$ is the set of natural numbers with discrete topology and ${\kappa}$ an arbitrary cardinal number.

We also recall that a topological space $X$ is $\Bbb{N}$-compact if it can be embedded as a closed subset of the product space $\Bbb{N}^{\kappa}$, for some cardinal number ${\kappa}$,  see \cite{EM, M1, M2, M3, M4, M5} for more details on this subject.

 For every zero-dimensional space $X$, there exists an $\Bbb{N}$-compact space $\upsilon_0X$ such that $X$ is dense in it and every continuous function $f:X\rightarrow Y$, with $Y$ an $\Bbb{N}$-compact space, has a unique extension $f^*:\upsilon_0X\rightarrow Y$. We can replace an arbitrary $\Bbb{N}$-compact space $Y$ by the fixed discrete space $\Bbb{Z}$ (the set of integer numbers) and have the following characterization of $\Bbb{N}$-compactification of a zero-dimensional space, see \cite [5.4 (d)]{PW}.

\begin{thm}\label{1.1}
An $\Bbb{N}$-compact extension $T$ of a zero-dimensional space $X$ is homeomorphic to $\upsilon_0X$ if and only if for each continuous function $f:X\rightarrow \Bbb{Z}$, there exists $F:T\rightarrow \Bbb{Z}$ such that $F|_{X}=f$.
\end{thm}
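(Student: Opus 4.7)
The plan is to prove both directions using the universal property of $\upsilon_0 X$ stated in the paragraph preceding the theorem, with the key observation that $\mathbb{Z}$ is itself an $\mathbb{N}$-compact space (being a countable discrete space, it is homeomorphic to a closed subspace of $\mathbb{N}$).

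For the forward direction, suppose $T \cong \upsilon_0 X$. Since $\mathbb{Z}$ is $\mathbb{N}$-compact, the universal property recalled before the theorem immediately gives, for each continuous $f : X \to \mathbb{Z}$, an extension $F : T \to \mathbb{Z}$ with $F|_X = f$. This direction is essentially a tautology once one notes that $\mathbb{Z}$ is $\mathbb{N}$-compact.

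The reverse direction is the substantive one. Assuming $T$ is an $\mathbb{N}$-compact extension of $X$ in which every continuous $\mathbb{Z}$-valued function on $X$ extends, I would first upgrade this hypothesis to the full universal property. Let $Y$ be an arbitrary $\mathbb{N}$-compact space and $g : X \to Y$ continuous. Fix an embedding $Y \hookrightarrow \mathbb{N}^{\kappa}$ as a closed subspace, and write $g = (g_\alpha)_{\alpha < \kappa}$ with each $g_\alpha : X \to \mathbb{N} \subseteq \mathbb{Z}$. By hypothesis each $g_\alpha$ extends to $G_\alpha : T \to \mathbb{Z}$; since $X$ is dense in $T$ and $\mathbb{N}$ is closed in the discrete space $\mathbb{Z}$, we have $G_\alpha(T) \subseteq \overline{G_\alpha(X)} \subseteq \mathbb{N}$. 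Assembling coordinates, $G := (G_\alpha) : T \to \mathbb{N}^{\kappa}$ is continuous with $G|_X = g$, and density of $X$ in $T$ together with closedness of $Y$ in $\mathbb{N}^{\kappa}$ forces $G(T) \subseteq \overline{g(X)} \subseteq Y$. Hence $G : T \to Y$ is the desired extension.

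Having shown that $T$ enjoys the same universal property that characterizes $\upsilon_0 X$, I would conclude by a standard abstract-nonsense argument. The inclusion $X \hookrightarrow T$ extends (via the universal property of $\upsilon_0 X$) to a continuous map $\varphi : \upsilon_0 X \to T$, and the inclusion $X \hookrightarrow \upsilon_0 X$ extends (via the property just established for $T$) to a continuous map $\psi : T \to \upsilon_0 X$. Both $\psi \circ \varphi$ and $\varphi \circ \psi$ restrict to the identity on the dense subspace $X$, and since the codomains are Hausdorff, both compositions are the identity; therefore $T \cong \upsilon_0 X$.

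The main obstacle is the passage from $\mathbb{Z}$-valued extensions to $Y$-valued extensions for an arbitrary $\mathbb{N}$-compact $Y$; the subtlety is ensuring that the coordinatewise extension of $g$ actually lands inside $Y$ rather than merely inside $\mathbb{N}^{\kappa}$, and this is precisely where the combination of density of $X$ in $T$ and closedness of $Y$ in $\mathbb{N}^{\kappa}$ (that is, $\mathbb{N}$-compactness of $Y$) is used.
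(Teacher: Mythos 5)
The paper does not prove this theorem at all: it is quoted from Porter--Woods \cite[5.4(d)]{PW} as a known characterization, so there is no in-paper argument to compare against. Your proof is correct and is essentially the standard argument behind that citation. The forward direction is indeed immediate once one notes that $\Bbb{Z}$, being countable and discrete, is $\Bbb{N}$-compact. In the reverse direction you correctly identify and handle the two delicate points: first, that each coordinate extension $G_\alpha$ lands in $\Bbb{N}$ because $G_\alpha(T)\subseteq \overline{G_\alpha(X)}$ by density and every subset of a discrete space is closed; second, that the assembled map $G$ lands in $Y$ because $Y$ is closed in $\Bbb{N}^{\kappa}$ and $G(T)\subseteq\overline{g(X)}\subseteq Y$. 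The concluding identification of $T$ with $\upsilon_0X$ via the two mutually inverse extensions of the identity is the usual uniqueness argument for reflections, and the appeal to Hausdorffness of the codomains (which holds since $\Bbb{N}$-compact spaces embed in $\Bbb{N}^{\kappa}$) to upgrade agreement on a dense subspace to equality is exactly what is needed. The only convention worth making explicit is that ``homeomorphic to $\upsilon_0X$'' is meant as an equivalence of extensions, i.e., via a homeomorphism fixing $X$ pointwise; your argument produces precisely such a homeomorphism, and the forward direction tacitly uses this convention as well.
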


We remind that a Tychonoff space $X$ is strongly zero-dimensional if and only if every two disjoint zero-sets in $X$ are separated by a clopen partition. It is well known that every strongly zero-dimensional real compact space is $\Bbb{N}$-compact. Since every countable subset of $\Bbb{R}$ (the set of real numbers) is Lindel\"{o}ff and zero-dimensional, it should be strongly zero-dimensional. This fact implies that every countable subset of $\Bbb{R}$ is $\Bbb{N}$-compact. So we have the following lemma.

\begin{lem}\label{1.2}
Let $X$ be a zero-dimensional Hausdorff space. For each continuous function $f:X\rightarrow \Bbb{R}$ with countable image, there exists an extension $f^*:\upsilon_0X\rightarrow \Bbb{R}$ such that the image of $f^*$ equals to the image of $f$.
\end{lem}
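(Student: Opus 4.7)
The plan is to use the universal property of $\upsilon_0 X$ stated just before the lemma, applied to $f$ regarded as a map into its image.

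First I would set $Y = f(X) \subseteq \Bbb{R}$ with the subspace topology, and observe that the paragraph preceding the lemma already gives exactly the fact I need about $Y$: being a countable subset of $\Bbb{R}$, it is Lindel\"{o}ff and zero-dimensional, hence strongly zero-dimensional and realcompact, hence $\Bbb{N}$-compact. Thus $f$ may be viewed as a continuous map $f : X \to Y$ into an $\Bbb{N}$-compact space.

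Next, by the defining universal property of $\upsilon_0 X$ (stated in the paragraph just above Theorem \ref{1.1}), there exists a unique continuous extension $\widetilde{f} : \upsilon_0 X \to Y$ with $\widetilde{f}|_X = f$. Composing with the inclusion $i : Y \hookrightarrow \Bbb{R}$ yields a continuous function $f^* := i \circ \widetilde{f} : \upsilon_0 X \to \Bbb{R}$ extending $f$.

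It remains to verify that the image of $f^*$ coincides with the image of $f$. On the one hand, $f^*(\upsilon_0 X) \subseteq Y = f(X)$ because $\widetilde{f}$ takes values in $Y$ by construction. On the other hand, $f^*(\upsilon_0 X) \supseteq f^*(X) = f(X)$. Combining these inclusions gives $f^*(\upsilon_0 X) = f(X)$, as required.

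There is no real obstacle here; the only substantive point is the identification of $f(X)$ as $\Bbb{N}$-compact, and this is supplied verbatim by the discussion immediately preceding the lemma. The rest is a formal application of the universal property of the $\Bbb{N}$-compactification.
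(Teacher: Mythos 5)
Your proposal is correct and follows exactly the route the paper intends: the lemma is stated as an immediate consequence of the preceding paragraph, which establishes that the countable set $f(X)\subseteq \Bbb{R}$ is $\Bbb{N}$-compact, so the universal property of $\upsilon_0X$ applied to $f:X\to f(X)$ gives the extension, and the image is preserved since the extension takes values in $f(X)$ and restricts to $f$ on the dense subset $X$.
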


For an arbitrary Tychonoff space $X$, we denote by $C_c(X)$ the set of all continuous real valued functions on $X$ with countable image. The set $C_c(X)$ forms a subring with pointwise addition and multiplication. Ghadermazi, Karamzadeh and Namdari showed in \cite{GKN} that for a Tychonoff space $X$ there exists a zero-dimensional space $Y$ such that $C_c(X)\cong C_c(Y)$ as a ring isomorphism. Motivated by this fact, in the present article we restrict our attension to zero-dimensional spaces. The reader could find all prerequisites and unfamiliar notions for this subring in \cite{GKN}.

For a zero-dimensional space $X$, let $\beta_0X$ denote its Banaschewski compactification. We recall that $\beta_0X$ is the unique (up to homeomorphism) zero-dimensional compact space containing $X$ as a dense subset such that every continuous two-valued function $f:X\rightarrow \{0,1\}$ has an extension to $\beta_0X$, see \cite[4.7, Corollary (f)]{PW}. It was shown that $X$ is strongly zero-dimensional if and only if $\beta_0X$ is homeomorphic to $\beta X$, i.e., $\beta X$ is zero-dimensional, see \cite{E}. 

Dowker gave an example of a zero-dimensional space $X$ in which $\beta X$ is not zero-dimensional and hence $\beta X\neq \beta_0X$, see \cite[exercise 4V]{PW}. The structure of $\beta_0X$ is related to the clopen ultrafilters defined on $X$. Indeed $\beta_0X$ is homeomorphic to the set of all clopen ultrafilters equipped with the stone topology, see \cite[4.7]{PW}. In \cite[exercise 5E]{PW}, it was also given an outline for recovering $\upsilon_0X$ as a subspace of all clopen ultrafilters on $X$ which have countable intersection property. Therefore we have $X\subseteq \upsilon_0X\subseteq \beta_0X$. Note that for $p\in \beta_0X\setminus \upsilon_0X$, there exists a sequence $\{V_n:n\in \Bbb{N}\}$ of clopen neighborhoods of $p$ in $\beta_0X$ such that $\cap_{n=1}^{\infty}V_n$ does not meet $X$.\\
This article consists of two parts. In part one, by applying previous notions, we characterize some important ideals in $C_c(X)$. It is shown that the set of all functions in $C_c(X)$ with compact (resp., pseudocompact)  support equals to the ideal $O_c^{\beta_0X\setminus X}$ (resp., $M_c^{\beta_0X\setminus \upsilon_0X}$). It is shown that for an $\Bbb{N}$-compact space $X$, the intersection of all free maximal ideals in $C_c(X)$ equals to the ideal $C_c^K(X)$. In part two, we apply the subring $C_c(X)$ and some related notions to find some information about $\beta_0X$. For example, we will give the least cardinality of the remainder $\beta_0X\setminus \upsilon_0X$, i.e., the set of all clopen ultrafilters on $X$ which do not have countable intersection property. It is shown that for a non pseudocompact space $X$, there are at least $2^{2^{\aleph_0}}$ such clopen ultrafilters on $X$. We shall show that if $X$ is locally compact and $\Bbb{N}$-compact, then $\beta_0X\setminus X$ is an almost $P$-space, i.e., a space for which the interior of every zero-set is nonempty, see \cite{L} and \cite{V}. We show that zero-sets of $\beta_0X$ which do not meet $X$ are Parovi$\breve{\mbox{c}}$enko spaces. In the end, it is shown that whenever $X$ is zero-dimensional, locally compact which is not pseudocompact, the cellularity of the spaces $\beta_0X\setminus \upsilon_0X$ and $\beta_0X\setminus X$  are at least $2^{\aleph_0}$. We close this section with the following two results which are useful in the rest of this article.

\begin{lem}\label{1.3}
(a) For a sequence $\{U_n:n\in \Bbb{N}\}$ of clopen subsets of $X$, there exists $f\in C_c(X)$ such that $Z(f)=\cap_{n=1}^{\infty}U_n$.\\
(b) If $f\in C_c(X)$, then there exists a sequence $\{W_n:n\in \Bbb{N}\}$ of clopen subsets of $X$ such that $Z(f)=\cap_{n=1}^{\infty}W_n$.
\end{lem}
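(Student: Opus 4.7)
The plan for part (a) is to encode membership in the intersection $\bigcap_{n=1}^{\infty}U_n$ by the smallest index at which $x$ escapes some $U_n$. Define $f\colon X\to\mathbb{R}$ by $f(x)=0$ when $x\in\bigcap_{n=1}^{\infty}U_n$ and $f(x)=1/n$ when $n=\min\{k\in\Bbb{N}:x\notin U_k\}$. Then $f(X)\subseteq \{0\}\cup\{1/n:n\in\Bbb{N}\}$, which is countable, and $Z(f)=\bigcap_{n=1}^{\infty}U_n$ by construction. The only nontrivial step is continuity. For each $n$ the fibre $f^{-1}(\{1/n\})=(U_1\cap\cdots\cap U_{n-1})\setminus U_n$ is clopen, so $f$ is continuous at every point where it does not vanish. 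At a point $x$ with $f(x)=0$, given $\varepsilon>0$ I pick $N$ with $1/N<\varepsilon$; a direct calculation gives $f^{-1}\bigl((-1/N,1/N)\bigr)=U_1\cap\cdots\cap U_N$, which is a clopen neighborhood of $x$, so $f$ is continuous there as well.

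For part (b) the plan is to push the problem into the image. Set $A:=f(X)$; by the discussion preceding Lemma~\ref{1.2}, $A$ is a countable, and therefore strongly zero-dimensional and $\Bbb{N}$-compact, subspace of $\Bbb{R}$. If $0\notin A$, then $Z(f)=\varnothing$ and I take $W_n=\varnothing$ for every $n$. Otherwise, since $A$ is zero-dimensional and metrizable, for each $n\in\Bbb{N}$ I can choose a clopen (in $A$) neighborhood $V_n$ of $0$ with $V_n\subseteq(-1/n,1/n)$, so that $\bigcap_{n=1}^{\infty}V_n=\{0\}$. Then $W_n:=f^{-1}(V_n)$ is clopen in $X$ (preimage of a clopen set under a continuous map) and $\bigcap_{n=1}^{\infty}W_n=f^{-1}(\{0\})=Z(f)$.

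The main obstacle is really confined to part (a): the natural temptation is to form a uniformly convergent series such as $\sum_{n=1}^{\infty}2^{-n}\chi_{X\setminus U_n}$, but the resulting Cantor-type image is uncountable, so that function fails to lie in $C_c(X)$. The ``first index missed'' construction is designed precisely to keep the image countable, after which the continuity check reduces to observing that preimages of tail neighborhoods of $0$ coincide with finite intersections of the $U_n$'s. Part (b) presents no serious obstacle once one exploits zero-dimensionality of the countable image $f(X)$ rather than merely its metrizability.
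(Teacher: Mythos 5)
Your proof is correct and is essentially the paper's argument: your ``first escape index'' function in part (a) is exactly what the paper's series $\sum_{n=1}^{\infty}2^{-n}\chi_{_{X\setminus U_n}}$ computes after its preliminary step of replacing $U_n$ by $U_1\cap\cdots\cap U_n$ --- it is that nesting, which you omit to mention, that collapses the would-be Cantor-type image to $\{0\}\cup\{2^{-n}:n\geq 0\}$, so the ``obstacle'' you describe is not present in the paper's proof. Your part (b) likewise matches the paper's construction, which takes $W_n=f^{-1}[0,r_n)$ for a decreasing sequence $r_n\to 0$ avoiding the countable image $f(X)$; this is just a concrete choice of your clopen neighborhoods $V_n$ of $0$ in $f(X)$.
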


\begin{proof}
(a) With out lose of generality, we could assume that $U_1\supseteq U_2\supseteq U_3\cdots$ be a decreasing sequence of clopen sets of $X$. Now define $$f=\sum_{n=1}^{\infty}\frac{1}{2^n}\chi_{_{\left(X\setminus U_n\right)}},$$
where $\chi_{_{X\setminus U_n}}$ is the characteristic function of the clopen set $X\setminus U_n$.
 It is easy to see that $f(X)\subseteq\{0\}\cup\{\frac{1}{2^n}:n\in \Bbb{N}\cup\{0\}\}$. Therefore $f\in C_c(X)$ and $Z(f)=\cap_{n=1}^{\infty}U_n$.\\
(b) Suppose that $Z\in Z_c[X]$. Consider $0<f\in C_c(X)$ such that $Z=Z(f)$. Choose a decreasing sequence $r_1>r_2>\cdots>r_n>\cdots$ of real numbers which tends to zero and for each $n\in \Bbb{N}$, $r_n\notin f(X)$. For each $n\in \Bbb{N}$, define $W_n=f^{-1}[0,r_n)$. Then for each $n\in \Bbb{N}$, $W_n$ is clopen in $X$ and $Z(f)=\cap_{n=1}^{\infty}W_n$. So we are done.
\end{proof}

\begin{cor}\label{1.4}
Let $X$ be a zero-dimensional space. For $f\in C_c(X)$ there exists $F\in C_c(\beta_0X)$ such that $Z(f)=Z(F)\cap X$.
\end{cor}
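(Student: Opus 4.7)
The plan is to combine parts (a) and (b) of Lemma \ref{1.3} via the universal property of $\beta_0X$. By Lemma \ref{1.3}(b), I would first write $Z(f)=\bigcap_{n=1}^{\infty}W_n$ for some sequence $\{W_n:n\in\Bbb{N}\}$ of clopen subsets of $X$. The task then reduces to producing, in the larger compact space $\beta_0X$, a countable family of clopen sets whose trace on $X$ is exactly the family $\{W_n\}$.

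For each $n$, the characteristic function $\chi_{W_n}\colon X\to\{0,1\}$ is continuous, and by the defining property of the Banaschewski compactification recalled above it extends to a continuous function $\beta_0X\to\{0,1\}$. Setting $V_n$ to be the preimage of $1$ under this extension yields a clopen subset of $\beta_0X$ with $V_n\cap X=W_n$. Applying Lemma \ref{1.3}(a) to the sequence $\{V_n:n\in\Bbb{N}\}$ in the zero-dimensional space $\beta_0X$ provides $F\in C_c(\beta_0X)$ with $Z(F)=\bigcap_{n=1}^{\infty}V_n$.

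It remains to verify the trace identity, which is purely set-theoretic:
\[
Z(F)\cap X=\Bigl(\bigcap_{n=1}^{\infty}V_n\Bigr)\cap X=\bigcap_{n=1}^{\infty}(V_n\cap X)=\bigcap_{n=1}^{\infty}W_n=Z(f).
\]
There is no real obstacle here; the only conceptual point is the extension step, which is immediate from the two-valued extension property characterizing $\beta_0X$. The proof is short and mechanical once one recognizes that Lemma \ref{1.3} reduces zero-sets of countable-image functions to countable intersections of clopen sets, a class of sets that is manifestly preserved by the Banaschewski extension.
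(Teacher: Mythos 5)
Your argument is correct and follows essentially the same route as the paper: both reduce $Z(f)$ to a countable intersection of clopen sets via Lemma \ref{1.3}(b), lift each clopen set to a clopen subset of $\beta_0X$ whose trace on $X$ is the original set, and then apply Lemma \ref{1.3}(a) in $\beta_0X$. The only cosmetic difference is that the paper obtains the clopen lift as $cl_{\beta_0X}W_n$ directly, whereas you extend the characteristic function $\chi_{W_n}$ and take the preimage of $1$ --- these yield the same set, since $X$ is dense in $\beta_0X$.
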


\begin{proof}
By part (b) of Lemma \ref{1.3}, there exists a sequence $\{W_n:n\in \Bbb{N}\}$ of clopen subsets of $X$ such that $Z(f)=\cap_{n=1}^{\infty}W_n$. For each $n\in \Bbb{N}$, $cl_{\beta_0X}W_n$ is a clopen subset in $\beta_0X$. Part (a) of Lemma \ref{1.3} implies that there exists $F\in C_c(X)$ such that $Z(F)=\bigcap_{n=1}^{\infty}cl_{\beta_0X}W_n$. Clearly $Z(f)=Z(F)\cap X$.
\end{proof}

\section{Characterisation of some special ideals in $C_c(X)$.}

In the begining of this section we characterize maximal ideals of $C_c(X)$. This characterization leads us to specify the ideal which consists of  all functions in $C_c(X)$ with compact (resp., pseudocompact) support. We remind that in \cite{BKM} it is shown that the space of maximal ideals of $C_c(X)$ is isomorphic to $\beta_0X$. But we need to settle the internal characterization of maximal ideals of $C_c(X)$.  In the following  we could observe the counterpart of Gelfand-Kolmogoroff theorem in rings of continuous functions. First, we need a lemma which is essential for characterizing all maximal ideals of this subring.

\begin{lem}\label{2.1}
Let $X$ be zero-dimensional. For each two $f,g\in C_c(X)$ we have $$cl_{\beta_0 X}\left(Z(f)\cap Z(g)\right)=cl_{\beta_0 X}Z(f)\cap cl_{\beta_0 X}Z(g).$$
\end{lem}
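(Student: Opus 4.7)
The plan is to establish the nontrivial inclusion $cl_{\beta_0 X} Z(f) \cap cl_{\beta_0 X} Z(g) \subseteq cl_{\beta_0 X}(Z(f) \cap Z(g))$; the reverse is immediate from the monotonicity of closure applied to $Z(f) \cap Z(g) \subseteq Z(f)$ and to $Z(f) \cap Z(g) \subseteq Z(g)$. First I would use Lemma \ref{1.3}(b) to write $Z(f) = \bigcap_n W_n$ and $Z(g) = \bigcap_n W_n'$ for decreasing sequences $\{W_n\}$, $\{W_n'\}$ of clopen subsets of $X$. The key preliminary is that for any clopen $W \subseteq X$ the set $cl_{\beta_0 X} W$ is the unique clopen subset of $\beta_0 X$ whose trace on $X$ equals $W$ (existence by the universal property of $\beta_0 X$ applied to $\chi_W\colon X \to \{0,1\}$, uniqueness by density of $X$), so $W \mapsto cl_{\beta_0 X} W$ is a Boolean homomorphism; in particular $cl_{\beta_0 X}(W_n \cap W_n') = cl_{\beta_0 X} W_n \cap cl_{\beta_0 X} W_n'$, and any $p$ on the right-hand side of the lemma lies in $cl_{\beta_0 X}(W_n \cap W_n')$ for every $n$.

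Setting $V_n = W_n \cap W_n'$, we obtain a decreasing sequence of clopens in $X$ with $\bigcap_n V_n = Z(f) \cap Z(g)$, each $V_n$ belonging to the clopen ultrafilter $\mathcal{U}_p$ associated with $p$. I would then argue by contradiction: suppose some clopen $U \ni p$ in $\beta_0 X$ is disjoint from $Z(f) \cap Z(g)$ and set $V = U \cap X \in \mathcal{U}_p$. The disjoint nonempty sets $V \cap Z(f)$ and $V \cap Z(g)$ are zero-sets in $C_c(V)$, and the goal reduces to producing a clopen partition $V = V_1 \sqcup V_2$ with $V \cap Z(f) \subseteq V_1$ and $V \cap Z(g) \subseteq V_2$: whichever of $V_1, V_2$ belongs to $\mathcal{U}_p$ would then be a clopen element of $\mathcal{U}_p$ missing one of $Z(f)$ or $Z(g)$, contradicting $p \in cl_{\beta_0 X} Z(f) \cap cl_{\beta_0 X} Z(g)$.

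To produce such a partition I would work with $h = f^2 + g^2 \in C_c(X)$, whose zero-set is $Z(f) \cap Z(g)$ and which is therefore strictly positive on $V$. In the favourable case $\inf_V h \geq \epsilon^2 > 0$, picking $\epsilon/2$ outside the countable set $|g|(X)$ and taking $V_1 = \{x \in V : |g(x)| > \epsilon/2\}$ gives the required partition: $V_1$ is clopen, it contains $V \cap Z(f)$ (where $g^2 = h \geq \epsilon^2$), and its complement in $V$ contains $V \cap Z(g)$ while forcing $|f|^2 = h - g^2 \geq 3\epsilon^2/4$, so $V \setminus V_1$ is disjoint from $Z(f)$. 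The main obstacle I foresee is the residual case $\inf_V h = 0$, in which no single $\epsilon$ works. I plan to handle it by refining $V$ through the clopen decomposition $V = (V \setminus T_n) \sqcup (V \cap T_n)$, where $T_n = h^{-1}[0, r_n)$ comes from Lemma \ref{1.3}(b) applied to $h$: either $V \setminus T_n \in \mathcal{U}_p$ for some $n$, which reduces to the bounded-below case on the clopen set $V \setminus T_n$ (where $h \geq r_n$), or $V \cap T_n \in \mathcal{U}_p$ for every $n$, in which case a further, more delicate clustering argument must be used to contradict $V \cap Z(f) \cap Z(g) = \emptyset$. This last sub-case, ensuring that cluster points of the approximating simultaneous-zero sequences do not escape into $\beta_0 X \setminus X$, is where I expect the technical heart of the proof to lie.
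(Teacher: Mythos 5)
Your overall architecture matches the paper's: the easy inclusion by monotonicity of closure, then, for $p$ in the right-hand side, passing to a clopen $V = U\cap X$ with $V\cap Z(f)\cap Z(g)=\emptyset$ and trying to separate $V\cap Z(f)$ from $V\cap Z(g)$ by a clopen partition, which expels $p$ from one of the two closures. The gap is in how you produce that partition. Your recipe $V_1=\{x\in V: |g(x)|>\epsilon/2\}$ works only when $\inf_V(f^2+g^2)\geq \epsilon^2>0$, and the residual case is not a deferrable technicality: it is where the entire content of the lemma sits, and it occurs already for $X=\Bbb{N}$ with $f(n)=1/n$ on the evens and $0$ on the odds and $g$ the mirror image, where $f^2+g^2=1/n^2$ has infimum $0$ on every cofinite set; there every free ultrafilter lands in your horn ``$V\cap T_n\in\mathcal{U}_p$ for all $n$,'' and your plan for that horn is an unproved appeal to ``a further, more delicate clustering argument.'' As written, the proof is incomplete.

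The paper closes exactly this point by normalizing rather than thresholding $f^2+g^2$: since $V\cap Z(f)$ and $V\cap Z(g)$ are disjoint zero-sets of functions $a,b\in C_c(X)$ (for instance $a=\chi_{X\setminus V}+f^2$ and $b=\chi_{X\setminus V}+g^2$), the function $h=a^2/(a^2+b^2)$ lies in $C_c(X)$ because the image of the pair $(a,b)$ is countable, satisfies $Z(h)=V\cap Z(f)$ and $h^{-1}(1)=V\cap Z(g)$, and takes values in $[0,1]$; a single threshold $r\in(0,1)\setminus h(X)$ then yields the clopen partition with no case analysis on the infimum. (Your route can also be salvaged without normalizing: the annuli $A_n=V\cap h^{-1}[r_{n+1},r_n)$ form a locally finite clopen partition of $V$ on each of which $h$ is bounded below, so the unions of the pieces produced by your bounded-below argument on each $A_n$ remain clopen. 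But some such device is mandatory; the ultrafilter dichotomy on the $T_n$ alone does not close the argument.) Finally, your opening paragraph writing $Z(f)$ and $Z(g)$ as countable intersections of clopen sets and noting that $W\mapsto cl_{\beta_0X}W$ is a Boolean homomorphism is correct but plays no role in the rest of your argument and could be dropped.
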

\begin{proof}
Case 1.  Suppose $Z(f)\cap Z(g)=\emptyset$. There exists $h\in C_c(X)$ such that $Z(f)=Z(h)$ and $Z(g)=h^{-1}(1)$. Choose $0<r<1$ such that $r\notin h(X)$ and put $U=h^{-1}(-\infty, r)$. The subset $U$ is clopen in $X$, $Z(f)\subseteq  U$ and $Z(g)\subseteq X\setminus U$. Note that $X$ is two-embedded in $\beta_0 X$ and this implies that $cl_{\beta_0 X}U\cap cl_{\beta_0 X}(X\setminus U)=\emptyset$ and then $cl_{\beta_0 X}Z(f)\cap cl_{\beta_0 X}Z(g)=\emptyset$.\\Case 2. Evidently $cl_{\beta_0 X}\left(Z(f)\cap Z(g)\right)\subseteq cl_{\beta_0 X}Z(f)\cap cl_{\beta_0 X}Z(g)$. For $p\in cl_{\beta_0 X}Z(f)\cap cl_{\beta_0 X}Z(g)$, assume that $p$ does not belong to $cl_{\beta_0 X}\left(Z(f)\cap Z(g)\right)$. There exists a clopen set $U\subseteq \beta_0 X$ such that $p\in U$ and $U\cap\left(cl_{\beta_0 X}\left(Z(f)\cap Z(g)\right)\right)=\emptyset$. The subset $V=U\cap X$ is clopen in $X$ and also is the zero-set of the characteristic function $\chi_{_{X\setminus V}}$, i.e., $Z(\chi_{_{X\setminus V}})=V$. Then $V\cap Z(f)\cap Z(g)=\emptyset$. By case 1, we have $$cl_{\beta_0 X}(V\cap Z(f))\cap cl_{\beta_0 X}(V\cap Z(g))=\emptyset,$$
and hence $p$ does not belong to at least one of them, say for example $p\notin cl_{\beta_0 X}(V\cap Z(f))$. Choose a neighborhood $W$ of $p$ such that $W\cap V\cap Z(f)=\emptyset$ and hence $W\cap U\cap Z(f)=\emptyset$. But $W\cap U$ is a neighborhood of $p$ and must intersects $Z(f)$. This is a contradiction and the proof is complete.
\end{proof}

\begin{thm}\label{2.2}
[Gelfand-Kolmogoroff] The Maximal ideals of $C_c(X)$ are in one to one correspondence with the points of $\beta_0 X$ and are given by $$M^p_c=\{f\in C_c(X): p\in cl_{\beta_0 X}Z(f)\},$$
for $p\in \beta_0 X$.
\end{thm}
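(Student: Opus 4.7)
The plan is to mimic the classical Gelfand--Kolmogoroff argument, substituting clopen ultrafilters on $X$ for the usual $z$-ultrafilters. The statement breaks into three pieces: (i) each $M_c^p$ is a maximal ideal; (ii) every maximal ideal has this form; (iii) $p \mapsto M_c^p$ is injective.

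For (i), that $M_c^p$ is an ideal is immediate from Lemma \ref{2.1}: if $f,g \in M_c^p$ then
\[
p \in cl_{\beta_0 X}Z(f) \cap cl_{\beta_0 X}Z(g) = cl_{\beta_0 X}\bigl(Z(f)\cap Z(g)\bigr) \subseteq cl_{\beta_0 X}Z(f+g),
\]
and absorption is clear since $Z(hf) \supseteq Z(f)$. For maximality, given $g \notin M_c^p$ I would exploit zero-dimensionality of $\beta_0 X$ to pick a clopen neighborhood $U$ of $p$ disjoint from $cl_{\beta_0 X}Z(g)$. Setting $V = U \cap X$ (clopen in $X$) and $k = \chi_{_{X \setminus V}}$, one has $k \in M_c^p$ since $cl_{\beta_0 X}Z(k) = cl_{\beta_0 X}V \supseteq U \ni p$. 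The function $g^2 + k$ has countable image and vanishes nowhere (on $V$, $g \ne 0$; off $V$, $k = 1$), so its pointwise reciprocal again has countable image and belongs to $C_c(X)$. Thus $g^2 + k$ is a unit, yielding $1 \in (g) + M_c^p$ and proving maximality.

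For (ii), given a maximal ideal $M$, I would consider the family $\{cl_{\beta_0 X}Z(f) : f \in M\}$. Since $f,g \in M$ implies $f^2 + g^2 \in M$ with $Z(f^2 + g^2) = Z(f) \cap Z(g)$, Lemma \ref{2.1} promotes this to a family of closed sets in $\beta_0 X$ with the finite intersection property (each $Z(f)$ is nonempty as $f$ is a non-unit). Compactness of $\beta_0 X$ then delivers a point $p$ in the total intersection, giving $M \subseteq M_c^p$; maximality of $M$ forces equality. For (iii), two-embeddedness of $X$ in $\beta_0 X$ separates disjoint clopen neighborhoods of $p \ne q$ by clopen subsets of $X$, whose indicator functions distinguish $M_c^p$ from $M_c^q$.

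The step demanding the most care is the unit construction in (i): one must verify that $(g^2 + k)^{-1}$ lies in $C_c(X)$, which rests on the elementary but essential fact that $1/h$ inherits a countable image from $h$ whenever $h$ is nowhere zero. Everything else is a routine transcription of the $C(X)$ proof, with Lemma \ref{2.1} substituting for the standard identity $cl_{\beta X}(Z(f)\cap Z(g)) = cl_{\beta X}Z(f) \cap cl_{\beta X}Z(g)$.
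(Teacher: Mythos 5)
Your proposal is correct and follows essentially the same route as the paper: Lemma \ref{2.1} gives that $M^p_c$ is an ideal, a clopen neighborhood $U$ of $p$ with $V=U\cap X$ and the characteristic function $\chi_{_{X\setminus V}}$ produces the unit $g^2+\chi_{_{X\setminus V}}$ establishing maximality, and compactness of $\beta_0X$ applied to the family $\{cl_{\beta_0X}Z(f):f\in M\}$ (with the finite intersection property via Lemma \ref{2.1}) yields the converse. Your explicit injectivity step (iii) and the remark that reciprocals of nowhere-vanishing functions retain countable image are points the paper leaves implicit, but they do not change the argument.
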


\begin{proof}
First we show that for each $p\in \beta_0 X$, $M^p_c$ is a maximal ideal. By Lemma \ref{2.1}, one can verify that $M^p_c$ forms an ideal. Suppose on the contrary that there exists a maximal ideal $M$ which properly contains $M^p_c$. Therefore there exists $f\in M$ such that $p\notin cl_{\beta_0 X}Z(f)$. Note that $\beta_0 X$ is zero-dimensional. Hence there exists a clopen subset $U$ in $\beta_0 X$ such that $p\in U$ and $U\cap cl_{\beta_0 X}Z(f)=\emptyset$. The set $V=U\cap X$ is clopen in $X$ and clearly $p\in cl_{\beta_0 X}V$. Since $V$ is the zero-set of the characteristic function $\chi_{_{X\setminus V}}\in C_c(X)$, therefore $\chi_{_{X\setminus V}}\in M^p_c$. Consider the function $g=\chi_{_{X\setminus V}}+f^2$. Obviously $Z(g)=Z(\chi_{_{X\setminus V}})\cap Z(f)=V\cap Z(f)=\emptyset$. Therefore $M$ contains a unit of $C_c(X)$ which implies that $M=C_c(X)$.\\ Now we must show that each maximal ideal $M$ in $C_c(X)$ has this form. By Lemma \ref{2.1}, the set $\{cl_{\beta_0 X}Z(f): f\in M\}$ is a family of closed subsets with the finite intersection property and since $\beta_0 X$ is compact, there exists $p\in \bigcap_{f\in M}cl_{\beta_0 X}Z(f)$. Therefore $M\subseteq M^p_c$ and hence $M=M^p_c$.
\end{proof}

Suppose that $p\in \beta_0X$. We define the set $O^p_c$ as follows. 
$$O^p_c=\{f\in C_c(X): p\in int_{\beta_0X}cl_{\beta_0X}Z(f)\}.$$ One could observe that $O^p_c$ forms an ideal of $C_c(X)$ and the only maximal ideal containing $O^p_c$ is $M^p_c$. For $p\in \beta_0X$, in the following we characterize the set $Z[O^p_c]$. Note that in general, all bounded functions on $X$ could not be extended to $\beta_0X$. But the following proposition allows us to extend zero-sets of $O^p_c$.

\begin{prop}\label{2.3}
For each $p\in \beta_0X$, $$Z[O^p_c]=\{Z'\cap X:  Z'\in Z_c[\beta_0X], p\in int_{\beta_0X}Z'\}.$$
\end{prop}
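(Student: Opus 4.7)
The plan is to prove both inclusions using Corollary \ref{1.4} together with the denseness of $X$ in $\beta_0X$.

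For the forward inclusion $Z[O_c^p] \subseteq \{Z' \cap X : Z' \in Z_c[\beta_0X],\ p \in int_{\beta_0X} Z'\}$, I would start with $f \in O_c^p$ and apply Corollary \ref{1.4} to obtain $F \in C_c(\beta_0X)$ with $Z(f) = Z(F) \cap X$. The crucial extra observation, visible from the proof of Corollary \ref{1.4}, is that the $F$ produced there satisfies $cl_{\beta_0X}Z(f) \subseteq Z(F)$: indeed $Z(F)$ is built as $\bigcap_n cl_{\beta_0X}W_n$ where $Z(f) = \bigcap_n W_n$, so each $W_n \supseteq Z(f)$ gives $cl_{\beta_0X}Z(f) \subseteq cl_{\beta_0X}W_n$. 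Taking interiors, $p \in int_{\beta_0X}cl_{\beta_0X}Z(f) \subseteq int_{\beta_0X}Z(F)$, so $Z' := Z(F)$ has the required properties.

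For the reverse inclusion, I would take $Z' = Z(F)$ with $F \in C_c(\beta_0X)$ and $p \in int_{\beta_0X}Z'$, and set $f := F|_X$. Since $f(X) \subseteq F(\beta_0X)$ is countable, $f \in C_c(X)$, and obviously $Z(f) = Z' \cap X$. It remains to verify $f \in O_c^p$, i.e., $p \in int_{\beta_0X}cl_{\beta_0X}Z(f)$. Using zero-dimensionality of $\beta_0X$, pick a clopen neighborhood $U$ of $p$ with $U \subseteq Z'$. Then $U \cap X \subseteq Z' \cap X = Z(f)$, and since $X$ is dense in $\beta_0X$ the open set $U$ lies in $cl_{\beta_0X}(U \cap X) \subseteq cl_{\beta_0X}Z(f)$. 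Hence $p \in U \subseteq cl_{\beta_0X}Z(f)$, giving $p \in int_{\beta_0X}cl_{\beta_0X}Z(f)$ as required.

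Neither direction has any serious obstacle; the main subtlety is keeping straight what exactly Corollary \ref{1.4} supplies. The forward direction relies on the precise form of the zero-set $Z(F)$ produced in that corollary (not merely on the existence of some extension with $Z(F) \cap X = Z(f)$), because a general extension need not satisfy $cl_{\beta_0X}Z(f) \subseteq Z(F)$. The reverse direction is pure topology, trading on the density of $X$ in $\beta_0X$ together with openness of the interior.
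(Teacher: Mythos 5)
Your proof is correct and follows essentially the same route as the paper's: Corollary \ref{1.4} for the forward inclusion and density of $X$ plus zero-dimensionality of $\beta_0X$ for the reverse. The one quibble is that your claimed ``main subtlety'' is illusory --- any $Z'\in Z_c[\beta_0X]$ with $Z'\cap X=Z(f)$ is closed and contains $Z(f)$, hence automatically contains $cl_{\beta_0X}Z(f)$, so the forward direction needs only the existence statement of Corollary \ref{1.4} and not the particular construction of $Z(F)$ as $\bigcap_n cl_{\beta_0X}W_n$ (which is why the paper simply writes that $p\in int_{\beta_0X}Z'$ is clear).
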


\begin{proof}
Suppose that $Z\in Z[O^p_c]$, i.e., $p\in int_{\beta_0X}cl_{\beta_0X}Z$. By Corollary \ref{1.4}, there exists $Z'\in Z_c[\beta_0X]$ such that $Z'\cap X=Z$. Clearly $p\in int_{\beta_0X}Z'$. Hence $Z$ belongs to the right hand side set. Now let $Z'\in Z_c[\beta_0X]$ and $p\in int_{\beta_0X}Z'$. There exists a clopen set $U\subseteq \beta_0X$ such that $p\in U\subseteq int_{\beta_0X}Z'$. Therefore $U\cap X\subseteq Z'\cap X$ and hence $p\in int_{\beta_0X}cl_{\beta_0X}(Z'\cap X)$. This implies that $Z'\cap X\in Z[O^p_c]$.
\end{proof}

We denote by $C_c^K(X)$, the set of all functions of $C_c(X)$ with compact support. In the following we characterize the set $C_c^K(X)$ as an ideal of $C_c(X)$. 

\begin{thm}\label{2.4}
For a zero-dimensional space $X$, $C_c^K(X)=O_c^{\beta_0X\setminus X}$.
\end{thm}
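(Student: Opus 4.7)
The plan is to prove the equality by double inclusion, using the characterization of $O_c^p$ via interiors of closures of zero-sets and exploiting the zero-dimensionality of $\beta_0X$ throughout. Throughout, I will use the notation $\mathrm{supp}(f) = cl_X(X \setminus Z(f))$ and the observation that since $Z(f)$ is closed in $X$, one has $cl_{\beta_0X}Z(f) \cap X = Z(f)$ (any $x \in X$ in the $\beta_0X$-closure of $Z(f)$ lies in the $X$-closure of $Z(f)$, which equals $Z(f)$).

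For the inclusion $C_c^K(X) \subseteq O_c^{\beta_0X\setminus X}$, I would take $f \in C_c^K(X)$ and $p \in \beta_0X\setminus X$. Since $\mathrm{supp}(f)$ is compact and contained in $X$, it is closed in $\beta_0X$, so $p \notin \mathrm{supp}(f)$. Zero-dimensionality of $\beta_0X$ yields a clopen set $U$ with $p \in U \subseteq \beta_0X \setminus \mathrm{supp}(f)$. Then $U \cap X \subseteq X \setminus \mathrm{supp}(f) \subseteq Z(f)$, and since $X$ is dense in $\beta_0X$, the clopen set $U$ equals $cl_{\beta_0X}(U \cap X) \subseteq cl_{\beta_0X} Z(f)$. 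Hence $p \in U \subseteq int_{\beta_0X} cl_{\beta_0X} Z(f)$, so $f \in O_c^p$. Since $p$ was arbitrary, $f \in O_c^{\beta_0X \setminus X}$.

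For the reverse inclusion $O_c^{\beta_0X\setminus X} \subseteq C_c^K(X)$, suppose $f \in O_c^{\beta_0X\setminus X}$. I want to show $\mathrm{supp}(f)$ is compact, which (since $\beta_0X$ is compact Hausdorff) reduces to showing $cl_{\beta_0X}(X\setminus Z(f)) \subseteq X$. So fix $p \in \beta_0X \setminus X$; by hypothesis, $p \in int_{\beta_0X} cl_{\beta_0X} Z(f)$, and zero-dimensionality of $\beta_0X$ gives a clopen neighborhood $U$ of $p$ with $U \subseteq cl_{\beta_0X} Z(f)$. The key step is to upgrade this to $U \cap X \subseteq Z(f)$: for any $x \in U \cap X$, we have $x \in cl_{\beta_0X} Z(f) \cap X = Z(f)$ by the preliminary observation above. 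Consequently $U \cap (X \setminus Z(f)) = \emptyset$, so $p \notin cl_{\beta_0X}(X \setminus Z(f))$. This shows $cl_{\beta_0X}(X \setminus Z(f)) \subseteq X$, hence $\mathrm{supp}(f)$ is closed in the compact space $\beta_0X$ and thus compact.

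The main obstacle, such as it is, lies in the reverse inclusion: the condition $U \subseteq cl_{\beta_0X} Z(f)$ gives information only about the closure of $Z(f)$ in $\beta_0X$, whereas to conclude compactness of $\mathrm{supp}(f)$ one needs pointwise control inside $X$. The resolution is the simple but crucial density/closedness identity $cl_{\beta_0X}Z(f) \cap X = Z(f)$, which converts the neighborhood-in-$\beta_0X$ statement into a containment of $U \cap X$ in $Z(f)$. Once this is in hand, the rest is formal.
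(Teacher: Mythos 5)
Your proof is correct, but it takes a more direct route than the paper's. The paper deliberately channels both inclusions through Proposition \ref{2.3}, which describes $Z[O^p_c]$ in terms of zero-sets $Z'\in Z_c[\beta_0X]$ with $p\in int_{\beta_0X}Z'$; that in turn rests on the extension result Corollary \ref{1.4} (producing $Z'$ with $Z'\cap X=Z(f)$), and the compactness of $\mathrm{supp}(f)$ is then read off from the inclusion $\beta_0X\setminus int_{\beta_0X}Z'=cl_{\beta_0X}(\beta_0X\setminus Z')\subseteq X$. You bypass all of that: you work straight from the definition $O^p_c=\{f: p\in int_{\beta_0X}cl_{\beta_0X}Z(f)\}$, using only (i) zero-dimensionality of $\beta_0X$ to extract clopen neighborhoods, (ii) density of $X$ to get $U=cl_{\beta_0X}(U\cap X)$ for clopen $U$, and (iii) the elementary identity $cl_{\beta_0X}Z(f)\cap X=cl_XZ(f)=Z(f)$, which is exactly the right lever to convert the interior condition into the pointwise containment $U\cap X\subseteq Z(f)$ and hence $p\notin cl_{\beta_0X}(X\setminus Z(f))$. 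Both arguments are sound; yours is more elementary and self-contained for this particular theorem, while the paper's detour through Proposition \ref{2.3} buys a reusable description of $Z[O^p_c]$ that it exploits again elsewhere (e.g.\ in the treatment of pseudocompact supports). One small point worth making explicit in your write-up: the existence of a \emph{clopen} $U$ with $p\in U\subseteq\beta_0X\setminus\mathrm{supp}(f)$ uses that the clopen sets form a base of $\beta_0X$, which is precisely the zero-dimensionality you invoke.
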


\begin{proof}
Suppose that $f\in C_c^K(X)$. Then $cl_{X}\left(X\setminus Z(f)\right)$ is compact and hence $cl_{\beta_0X}\left(X\setminus Z(f)\right)\subseteq X$. By Corollary \ref{1.4}, there exists $Z'\in Z_c[\beta_0X]$ such that $Z(f)=Z'\cap X$. Therefore $X\cap(\beta_0X\setminus Z')=X\setminus Z(f)$. Since $X$ is dense and $\beta_0X\setminus Z'$ is open in $\beta_0X$, $cl_{\beta_0X}\left(\beta_0X\setminus Z'\right)\subseteq X$. Thus $\beta_0X\setminus X\subseteq \beta_0X\setminus cl_{\beta_0X}\left(\beta_0X\setminus Z'\right)=int_{\beta_0X}Z'$. Hence by Proposition \ref{2.3}, for all $p\in \beta_0X\setminus X$ , $Z'\cap X\in Z_c[O^p_c]$ and therefore $f\in O^{\beta_0X\setminus X}_c$. For the reverse inclusion, suppose that $f\in O^{\beta_0X\setminus X}_c$. Then $\beta_0X\setminus X\subseteq int_{\beta_oX}cl_{\beta_0X} Z(f)$. By Proposition \ref{2.3}, there exists $Z'\in Z_c[\beta_0X]$ such that $Z'\cap X=Z(f)$ and $\beta_0X\setminus X\subseteq int_{\beta_0X} Z'$. This implies that $\beta_0X\setminus int_{\beta_0X}Z'\subseteq X$. Since $\beta_0X\setminus int_{\beta_0X}Z'= cl_{\beta_0X}(\beta_0X\setminus Z')\subseteq X$ and $X\setminus Z(f)\subseteq \beta_0X\setminus Z'$, clearly $cl_{X}(X\setminus Z(f))$ is compact and hence $f\in C_c^K(X)$.
\end{proof}

The following two results are important for the rest of this section. We recall that a subset $S\subseteq X$ is $C_c$-embedded in $X$ if for each $f\in C_c(S)$, there exists $F\in C_c(X)$ such that $F|_X=f$.

\begin{prop}\label{2.5}
If $S\subseteq X$ is $C_c$-embedded in $X$, then it is separated by a clopen partition from every $Z\in Z_c[X]$ disjoint from it.
\end{prop}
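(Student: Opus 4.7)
The plan is to produce a function in $C_c(X)$ that is identically $1$ on $S$ and identically $0$ on $Z$, and then slice it open at a value its countable image misses to obtain the desired clopen partition.

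First I would fix $g\in C_c(X)$ with $Z(g)=Z$ (we may assume $g\ge 0$). Since $S\cap Z=\tohi$, the function $g$ is nowhere zero on $S$, so $h:=1/(g|_S)$ is a well-defined continuous function on $S$. Because $g|_S$ has countable image and $0$ is not among its values, $h$ also has countable image, hence $h\in C_c(S)$. This is the one ``trick'' in the argument and the place where countability genuinely matters: the analogous move in the ring $C(X)$ would give a bounded reciprocal automatically, but here we need to check that inverting does not destroy the countable-image property.

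Next I would use the hypothesis that $S$ is $C_c$-embedded in $X$ to extend $h$ to some $H\in C_c(X)$, and set $F:=Hg\in C_c(X)$. By construction $F\equiv 1$ on $S$ and $F\equiv 0$ on $Z$. The image $F(X)$ is contained in the (countable) set of products of values of $H$ and $g$, so it is countable. Therefore I can pick $r\in(0,1)$ with $r\notin F(X)$.

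Finally, because $F^{-1}\{r\}=\tohi$, the set $U:=F^{-1}(r,\infty)=F^{-1}[r,\infty)$ is simultaneously open and closed in $X$; since $F|_S=1>r$ we have $S\subseteq U$, and since $F|_Z=0<r$ we have $Z\subseteq X\setminus U$, giving the required clopen partition $\{U,X\setminus U\}$. I do not foresee a serious obstacle: the main point to double-check is that reciprocation and multiplication preserve countability of image so that $h\in C_c(S)$ and $F\in C_c(X)$, which legitimates both the use of $C_c$-embedding and the existence of the separating value $r$.
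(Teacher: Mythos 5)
Your argument is correct and is essentially the paper's own proof: both take the reciprocal of the given function on $S$, extend it using $C_c$-embedding, multiply back to get a function that is $1$ on $S$ and $0$ on $Z$, and then cut at a value in $(0,1)$ missed by the countable image to obtain the clopen partition. The only difference is the cosmetic choice of which half of the partition you name $U$.
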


\begin{proof}
Suppose that $h\in C_c(X)$ and $S\cap Z(h)=\emptyset$. Define $f(s)=\frac{1}{h(s)}$ for all $s\in S$. Let $F\in C_c(X)$ be such that $F|_{S}=f$. Put $k=hF$. Clearly $k\in C_c(X)$ and $k|_{S}=1$ and $k|_{Z(h)}=0$. It is enough to choose $0<r<1$ such that $r\notin k(X)$. Then $U=k^{-1}(-\infty, r)$ is a clopen subset of $X$ that $Z(h)\subseteq U$ and $S\subseteq X\setminus U$.
\end{proof}

In the following we need to know about the existence of $C_c$-embedded subsets. Note that $C$-embedded subsets are not applicable for our purpose. Because we do not know anything  about the cardinality of the image of the extension of a function with countable image. The following lemma gives us a condition for existing a $C_c$-subset in a zero-dimensional space. 

\begin{lem}\label{2.6}
Let $X$ be zero-dimensional and $f\in C_c(X)$ carries a subset $S\subseteq X$ homeomorphically to a closed subset $f(S)\subseteq f(X)$. Then $S$ is $C_c$-embedded in $X$.
\end{lem}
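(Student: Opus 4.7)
The plan is to build an extension $G \in C_c(X)$ of a given $g \in C_c(S)$ by pushing $g$ through the homeomorphism $f|_S$, extending across $f(X)$ by Tietze, and then pulling back through $f$. Since $f(X)$ is a countable subset of $\mathbb{R}$, the extension will automatically have countable image, which is the payoff of working inside $f(X)$ rather than all of $\mathbb{R}$.

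Concretely, fix $g \in C_c(S)$. Since $f|_S \colon S \to f(S)$ is a homeomorphism, define
\[
h \colon f(S) \to \mathbb{R}, \qquad h = g \circ (f|_S)^{-1}.
\]
Then $h$ is continuous, and its image coincides with $g(S)$, so $h$ has countable image. The hypothesis that $f(S)$ is closed in $f(X)$ is exactly what is needed for the next step: $f(X)$, being a countable subspace of $\mathbb{R}$, is a separable metric space, hence normal, so by the Tietze extension theorem there is a continuous $H \colon f(X) \to \mathbb{R}$ with $H|_{f(S)} = h$. Because $f(X)$ is countable, $H(f(X))$ is countable as well.

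Finally set $G = H \circ f \colon X \to \mathbb{R}$. Continuity is immediate as a composition of continuous maps. For $s \in S$,
\[
G(s) = H(f(s)) = h(f(s)) = g\bigl((f|_S)^{-1}(f(s))\bigr) = g(s),
\]
so $G|_S = g$. Moreover $G(X) \subseteq H(f(X))$ is countable, hence $G \in C_c(X)$, establishing that $S$ is $C_c$-embedded.

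There is no substantial obstacle: the only thing to notice is that the countability constraint on images (the defining feature of $C_c$) is handled for free, precisely because we extend across the countable space $f(X)$ rather than across $\mathbb{R}$; the closedness of $f(S)$ in $f(X)$ is what makes Tietze's theorem applicable inside this countable ambient space, and this is exactly what the hypothesis supplies.
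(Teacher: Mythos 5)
Your proposal is correct and follows essentially the same route as the paper: transfer $g$ to $f(S)$ via the inverse homeomorphism, extend over the countable (hence normal) space $f(X)$ by Tietze, and pull back along $f$, with countability of the image coming for free from the countability of $f(X)$. No issues.
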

\begin{proof}
Suppose that $f|_S:S\rightarrow f(S)$ is a homeomorphism. Therefore $f^{-1}:f(S)\rightarrow S$ is continuous. Consider $g\in C_c(S)$. We observe that the composition  $gof^{-1}$ belongs to $C_c(f(S))$.  The subset $f(X)$ is countable and therefore normal. Hence $gof^{-1}$ has an extension $G$ to all of $f(X)$. It is obvious to see that $G\in C_c(f(X))$. The composition $Gof$ belongs to $C_c(X)$. For $s\in S$, we have $Gof(s)=G(f(s))=gof^{-1}(f(s))=g(s)$. Therefore $Gof$ is the extension of $g$ to $X$ which has countable image.
\end{proof}

The previous lemma leads us to the following corollary.

\begin{cor}\label{2.7}
For a zero-dimensional space $X$, let $E\subseteq X$. Suppose the function $h\in C_c(X)$ is unbounded on $E$. Then $E$ contains a copy of $\Bbb{N}$, which is $C_c$-embedded in $X$ and $h$ approach to infinity on $E$.
\end{cor}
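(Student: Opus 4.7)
The plan is to build the copy of $\mathbb{N}$ inside $E$ by recursively choosing points whose $h$-values march off to infinity, and then to invoke Lemma \ref{2.6} once the discreteness and closedness conditions are verified.

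First I would choose, inductively, a sequence $\{x_n : n \in \mathbb{N}\} \subseteq E$ with $|h(x_{n+1})| > |h(x_n)|+1$ and $|h(x_1)|>1$; this is possible because $h$ is unbounded on $E$ (and $h(E)$ is countable, so it is bounded-above-by-no-real-number). Set $S=\{x_n:n\in\mathbb{N}\}$. By construction the values $h(x_n)$ are pairwise distinct and $|h(x_n)|\to\infty$, so $h(S)$ is a closed discrete subset of $\mathbb{R}$; in particular $h(S)$ is closed in $h(X)\subseteq\mathbb{R}$.

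Next I would show that $S$ is discrete in $X$ and that $h|_S$ is a homeomorphism onto $h(S)$. For each $n$, pick an open interval $V_n\subseteq\mathbb{R}$ about $h(x_n)$ that misses $h(x_m)$ for all $m\neq n$ (possible because $h(S)$ is discrete); then $h^{-1}(V_n)$ is open in $X$ and $h^{-1}(V_n)\cap S=\{x_n\}$, so $S$ is a discrete subspace of $X$, hence a topological copy of $\mathbb{N}$. The restriction $h|_S\colon S\to h(S)$ is a continuous bijection between discrete countable spaces, and therefore automatically a homeomorphism.

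At this point the hypotheses of Lemma \ref{2.6} are fulfilled: $h\in C_c(X)$ carries $S$ homeomorphically onto a closed subset $h(S)$ of $h(X)$. Lemma \ref{2.6} then delivers that $S$ is $C_c$-embedded in $X$, and the inequality $|h(x_n)|>n$ gives $h(x_n)\to\infty$, as required. The only mildly delicate step is the very first one --- making sure the recursive selection genuinely produces $h$-values tending to infinity rather than merely an unbounded set --- but this is immediate from unboundedness, since for each real $M$ there is some $x\in E$ with $|h(x)|>M$.
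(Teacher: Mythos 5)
Your proof is correct and follows exactly the route the paper intends: the paper offers no explicit proof of this corollary, presenting it as an immediate consequence of Lemma \ref{2.6}, and your argument --- recursively selecting points with $\lvert h(x_{n+1})\rvert > \lvert h(x_n)\rvert + 1$, checking that the image is closed and discrete in $h(X)$ and that $h$ restricts to a homeomorphism onto it, then invoking Lemma \ref{2.6} --- is precisely the standard way to fill in that gap. Nothing further is needed.
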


In the following, we adapt the original approach of Mandelker in \cite{Man1} and \cite{Man2} to functionally countabe subalgebras for  characterizing the subset consisting of all functions in $C_c(X)$ with pseudocompact support. We denote by $C_c^{\psi}(X)$, the set of all functions in $C_c(X)$ with pseudocompact support. We recall that a subset $S\subseteq X$ is relatively pseudocompact with respect to $C_c(X)$, if for each $f\in C_c(X)$, the function $f|_{S}$ is bounded.  We have the following equivalence for relatively pseudocompact subsets with respect to $C_c(X)$.

\begin{prop}\label{2.7.5}
Let $X$ be a zero-dimensional space. The set $A\subseteq X$ is relatively pseudocompact  with respect to $C_c(X)$ if and only if $cl_{\beta_0X}A\subseteq \upsilon_0X$.
\end{prop}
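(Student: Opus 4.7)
\medskip
\noindent\textbf{Proof proposal.} The plan is to prove the two directions separately, with the easy direction following from Lemma \ref{1.2} and compactness, and the harder direction handled by contrapositive, using the characterization of points of $\beta_0X\setminus \upsilon_0X$ recalled in the preliminaries to construct an explicit unbounded countable-image function.

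For the implication $(\Leftarrow)$, assume $cl_{\beta_0X}A\subseteq \upsilon_0X$ and fix $f\in C_c(X)$. By Lemma \ref{1.2}, $f$ extends to a continuous $f^{*}:\upsilon_0X\to \Bbb{R}$ with the same (countable) image. Since $cl_{\beta_0X}A$ is closed in the compact space $\beta_0X$, it is compact, and by hypothesis it is contained in $\upsilon_0X$; therefore $f^{*}(cl_{\beta_0X}A)$ is a compact, and hence bounded, subset of $\Bbb{R}$. It follows that $f|_A$ is bounded, so $A$ is relatively pseudocompact with respect to $C_c(X)$.

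For $(\Rightarrow)$, I would argue contrapositively. Suppose $cl_{\beta_0X}A\not\subseteq \upsilon_0X$ and pick $p\in cl_{\beta_0X}A\setminus \upsilon_0X$. By the fact recalled before Lemma \ref{1.3}, there is a sequence $\{V_n\}$ of clopen neighborhoods of $p$ in $\beta_0X$ with $\bigcap_n V_n\cap X=\emptyset$; replacing each $V_n$ by $V_1\cap\cdots\cap V_n$, assume the sequence is decreasing. Setting $U_n=V_n\cap X$, the sets $U_n$ form a decreasing sequence of clopen subsets of $X$ with $\bigcap_n U_n=\emptyset$. Define $f:X\to \Bbb{R}$ by $f(x)=k$ where $k=\max\{n:x\in U_n\}$ (and $f(x)=0$ if $x\notin U_1$); the maximum exists because $\bigcap_n U_n=\emptyset$ and the $U_n$ are nested. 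On the clopen partition $\{X\setminus U_1\}\cup\{U_n\setminus U_{n+1}:n\geq 1\}$ the function $f$ is locally constant, hence continuous, and its image lies in $\Bbb{N}\cup\{0\}$, so $f\in C_c(X)$.

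It remains to show $f$ is unbounded on $A$, which will contradict relative pseudocompactness. Since $p\in cl_{\beta_0X}A$ and each $V_n$ is an open neighborhood of $p$, we have $V_n\cap A\neq\emptyset$, i.e.\ $U_n\cap A\neq\emptyset$ for every $n$. Choosing $a_n\in U_n\cap A$ gives $f(a_n)\geq n$, so $\sup_{a\in A}f(a)=\infty$, finishing the proof. The main obstacle is the construction of this $C_c$-function; once the nested clopen sequence $U_n$ has been extracted from the characterization of $\beta_0X\setminus \upsilon_0X$, the rest is routine.
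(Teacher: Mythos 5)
Your proof is correct and follows essentially the same route as the paper: the sufficiency direction is identical (extend $f$ over $\upsilon_0X$ and use compactness of $cl_{\beta_0X}A$), and the necessity direction rests on the same key ingredient, namely the sequence of clopen neighborhoods of a point $p\in cl_{\beta_0X}A\setminus \upsilon_0X$ whose intersection misses $X$, together with the observation that $p$ is a limit point of $A$. The only cosmetic difference is that you build the unbounded member of $C_c(X)$ by hand as an $\Bbb{N}\cup\{0\}$-valued step function on the clopen partition $\{X\setminus U_1\}\cup\{U_n\setminus U_{n+1}\}$, whereas the paper invokes Lemma \ref{1.3}(a) to get $F\in C_c(\beta_0X)$ with $Z(F)=\bigcap_n V_n$ and takes the reciprocal of $F|_X$; both constructions are sound.
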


\begin{proof}
For the necessity, suppose $cl_{\beta_0X}A\cap(\beta_0X\setminus \upsilon_0X)\neq\emptyset$ and choose $p\in cl_{\beta_0X}A\cap(\beta_0X\setminus \upsilon_0X)$. There exists a sequence $\{V_n:n\in\Bbb{N}\}$ of clopen neighborhoods of $p$ in $\beta_0X$ such that $(\cap_{n=1}^{\infty}V_n)\cap \upsilon_0X=\emptyset$. By part (a) of Lemma \ref{1.3}, there exists $F\in C_c(\beta_0X)$ such that $Z(F)=\cap_{n=1}^{\infty}V_n$. Put $f=F|_X$ and define $h=\frac{1}{f}$. Clearly $h\in C_c(X)$ and since $p$ is a limit point of $A$ in $\beta_0X$, $h$ is unbounded on $A$, which is a contradiction. For the sufficiency, consider $f\in C_c(X)$. The extension $f^{\upsilon_0}\in C_c(\upsilon_0X)$ is bounded on the compact subset $cl_{\beta_0X}A$. Therefore $f$ should be bounded on $A$.
\end{proof}

For continuing our progress, we need a proposition which is due to Pierce. The reader could find it in \cite[Lemma 1.9.3]{PI}.
\begin{prop}\label{2.8}
A zero-dimensional space $X$ is not pseudocompact if and only if there exists a continuous and onto map $f:X\rightarrow \Bbb{N}$.
\end{prop}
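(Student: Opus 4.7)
The reverse implication is immediate: a continuous surjection $X\to\Bbb{N}$, viewed as a map into $\Bbb{R}$, is an unbounded continuous real-valued function on $X$, so $X$ is not pseudocompact.

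For the forward direction, since $X$ is not pseudocompact there exists a continuous $h:X\to\Bbb{R}$ with unbounded image; after replacing $h$ by $|h|$, one may assume $h\geq 0$ and $\sup h(X)=+\infty$. The plan is to manufacture a pairwise disjoint, locally finite sequence of nonempty clopen sets $U_1,U_2,\ldots$ in $X$ and then define $f:X\to\Bbb{N}$ by mapping $U_n$ to $n$ and the clopen complement of $\bigcup_{n\geq 1}U_n$ to $1$. Continuity of $f$ will follow because $f$ is locally constant on a clopen cover, and surjectivity will follow from $U_n\neq\emptyset$.

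The construction is inductive. Having chosen pairwise disjoint nonempty clopens $U_1,\ldots,U_k$ together with points $x_i\in U_i$ satisfying $h(x_i)\geq i$ and $|h(y)-h(x_i)|<\tfrac{1}{2}$ for $y\in U_i$, the set $X'=X\setminus\bigcup_{i\leq k}U_i$ is clopen, and $h$ is bounded on $\bigcup_{i\leq k}U_i$, so $h|_{X'}$ is still unbounded. Pick $x_{k+1}\in X'$ with $h(x_{k+1})\geq k+1$. Since $h^{-1}\bigl(h(x_{k+1})-\tfrac{1}{2},\,h(x_{k+1})+\tfrac{1}{2}\bigr)\cap X'$ is open in $X$ and contains $x_{k+1}$, zero-dimensionality furnishes a clopen neighborhood $U_{k+1}$ of $x_{k+1}$ contained in this set, which is the required next term.

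The key step I expect to be the main obstacle is verifying \emph{local finiteness} of the family $\{U_n\}$, since without it the union need not be closed and the complement need not be clopen. But this falls out of the $\tfrac{1}{2}$-tolerance: given $x\in X$, pick a neighborhood $V$ of $x$ on which $h$ stays inside $(h(x)-1,\,h(x)+1)$; if $V$ meets $U_n$, then $|h(x_n)|<h(x)+\tfrac{3}{2}$, which holds for only finitely many $n$ because $h(x_n)\to\infty$. Thus $\bigcup_{n\geq 1}U_n$ is a locally finite union of clopens, hence closed, and also open, so $U_0:=X\setminus\bigcup_{n\geq 1}U_n$ is clopen. Setting $f=n$ on $U_n$ for $n\geq 1$ and $f=1$ on $U_0$ then produces the required continuous surjection onto $\Bbb{N}$.
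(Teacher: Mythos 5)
Your proof is correct. Note that the paper does not supply its own argument for this proposition: it is quoted from Pierce (\cite[Lemma 1.9.3]{PI}) without proof, so there is nothing internal to compare against. Your construction --- an inductively chosen pairwise disjoint sequence of nonempty clopen sets $U_n$ on which $h$ is pinned within $\tfrac12$ of a value $\geq n$, with the $\tfrac12$-tolerance forcing local finiteness so that the complement of $\bigcup_n U_n$ is clopen and the resulting level map to $\Bbb{N}$ is continuous and onto --- is the standard proof and is complete; the local finiteness verification, which is indeed the only delicate point, is handled correctly.
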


\begin{thm}\label{2.9}
If $f\in C_c(X)$ and $X\setminus Z(f)$ is relatively pseudocompact with respect to $C_c(X)$, then $X\setminus Z(f)$ is pseudocompact.
\end{thm}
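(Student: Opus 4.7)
I plan to prove the contrapositive: assume $S=X\setminus Z(f)$ is not pseudocompact and construct $h\in C_c(X)$ unbounded on $S$, contradicting the hypothesis of relative pseudocompactness. Since $S$ is open in the zero-dimensional $X$, it is itself zero-dimensional, so Proposition \ref{2.8} furnishes a continuous surjection $g\colon S\to\mathbb{N}$. Using Lemma \ref{1.3}(b) applied to $f$, I write $S=\bigcup_{n=1}^{\infty}W_n$ with $W_n\subseteq W_{n+1}$ clopen in $X$. If $g|_{W_N}$ is unbounded for some $N$, then extending $g|_{W_N}$ to $X$ by zero on the clopen complement $X\setminus W_N$ produces a continuous function with countable image in $\mathbb{N}\cup\{0\}$, hence an element of $C_c(X)$ that is unbounded on $W_N\subseteq S$, giving the contradiction at once.

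The remaining case is that $g|_{W_n}$ is bounded for every $n$. Here I pick $s_n\in S$ with $g(s_n)=n$; because each $g|_{W_n}$ is bounded and $g(s_n)=n\to\infty$, the $s_n$ cannot lie in any fixed $W_N$, so $s_n\in W_{k(n)}\setminus W_{k(n)-1}$ with $k(n)\to\infty$, and $\{s_n\}$ is closed discrete in $S$. Lemma \ref{2.6} applied to $g\in C_c(S)$ — which carries the discrete set $\{s_n\}$ homeomorphically onto $g(S)=\mathbb{N}$, trivially closed in itself — exhibits $\{s_n\}$ as a $C_c$-embedded copy of $\mathbb{N}$ inside $S$.

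The main technical obstacle is upgrading this $C_c$-embedding from inside $S$ to inside $X$, since $S$ itself need not be $C_c$-embedded in $X$. The plan is to use the zero-dimensionality of $X$ to thin each pairwise disjoint clopen annulus $A_n=W_{k(n)}\setminus W_{k(n)-1}$ into a clopen tube $V_n\subseteq A_n$ with $s_n\in V_n$ and $V_n\cap\{s_m:m\neq n\}=\emptyset$, and then set $h=\sum_{n}n\,\chi_{V_n}$, with $h=0$ outside $\bigcup_n V_n$. Then $h(s_n)=n$, so $h$ is unbounded on $S$; the delicate step is verifying continuity of $h$ at the points of $Z(f)\cap cl_X\bigcup_n V_n$, and it is here that Proposition \ref{2.7.5} — i.e.\ the hypothesis recast as $cl_{\beta_0 X}S\subseteq\upsilon_0 X$ — must enter to control how the tubes can accumulate in $\beta_0 X$ and to let one choose the threshold sequence $\{r_n\}$ defining the $W_n$'s so that no continuity failure occurs on $Z(f)$.
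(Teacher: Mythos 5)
Your set-up --- the proof by contradiction, the easy case where some clopen $W_N$ already carries an unbounded member of $C_c(X)$, and the extraction via Lemma \ref{2.6} of a closed discrete sequence $\{s_n\}$ that is $C_c$-embedded in $S=X\setminus Z(f)$ --- is sound and runs parallel to the paper's opening moves. But the proposal stops exactly where the real difficulty is, and the plan sketched for the last step would not work as stated. The function $h=\sum_n n\,\chi_{V_n}$ is discontinuous at any point $p\in Z(f)$ each of whose neighborhoods meets infinitely many tubes $V_n$, and nothing you have established excludes such a point: $\{s_n\}$ is closed and discrete in the \emph{open} set $S$, but it may converge in $X$ to a point of $cl_XS\cap Z(f)$, in which case every clopen tube $V_n\ni s_n$ accumulates there no matter how it is thinned, and no choice of the thresholds $r_n$ defining the $W_n$ changes this. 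Invoking Proposition \ref{2.7.5} does not help either: the inclusion $cl_{\beta_0X}S\subseteq\upsilon_0X$ says nothing about accumulation of the $s_n$ at points of $Z(f)$ inside $X$ itself. So the continuity of your $h$ on $Z(f)$ is an unproved claim, and it is the whole content of the theorem.

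The paper closes precisely this gap with two devices missing from your outline. First, it applies Corollary \ref{2.7} inside the \emph{closed} subspace $cl_X(X\setminus Z(f))$, so the copy $D$ of $\Bbb{N}$ is $C_c$-embedded in that closed set; Proposition \ref{2.5} then separates $D$ from $Z(f)\cap cl_X(X\setminus Z(f))$ by a clopen partition $O$, which is exactly the control on accumulation that your tubes lack. Second, instead of an unbounded step function it builds a \emph{bounded} strictly positive function, namely $k=\frac{1}{h}\vee\chi_{S\setminus O}$ on $cl_X(X\setminus Z(f))$ pasted with the constant $1$ on $cl_X\bigl(int_XZ(f)\bigr)$, and only then inverts: since $k>0$ everywhere, is continuous with countable image, and tends to $0$ along $D$, the reciprocal $\frac{1}{k}$ lies in $C_c(X)$ and is unbounded on $D\subseteq X\setminus Z(f)$, giving the contradiction. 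To salvage your construction you would have to reproduce both steps, at which point you have essentially rewritten the paper's proof.
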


\begin{proof}
Suppose on the contrary that $S=cl_{X}(X\setminus Z(f))$ is not pseudocompact. Hence by Proposition \ref{2.8}, there exists $h\geq 1$ and $h\in C_c(S)$ that is unbounded on $X\setminus Z(f)$. By Corollary \ref{2.7}, there exists a countable discrete subset  $D\subseteq X\setminus Z(f)$ such that $D$ is $C_c$-embedded in $S$ and $h$ is unbounded on $D$. Proposition \ref{2.5} implies that there exists a clopen set $O\subseteq S$ such that $D\subseteq O$ and $Z(f)\cap S\subseteq S\setminus O$. Define the function $k$ as follows,
$$ k(x) = \left\{
  \begin{array}{l l}
    \frac{1}{h(s)}\vee \chi_{_{S\setminus O}}(s),& s\in S \\
    1, &  s\in cl_{X}\left(X\setminus S\right).
  \end{array} \right. $$
Pasting lemma implies that $k$ is continuous on $X$. Clearly $k\in C_c(X)$ and $k>0$. Hence $\frac{1}{k}$ is unbounded on $D$. Thus $S$ is not relatively pseudocompact with respect to $C_c(X)$, a contradiction.
\end{proof}

\begin{thm}\label{2.10}
Let $X$ be a zero-dimensional space and $f\in C_c(X)$. If $\beta_0X\setminus \upsilon_0X\subseteq cl_{\beta_0X}Z(f)$, then $\beta_0X\setminus \upsilon_0X\subseteq int_{\beta_0X} cl_{\beta_0X}Z(f)$
\end{thm}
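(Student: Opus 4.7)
The plan is to reduce the statement, via Proposition \ref{2.7.5}, to the assertion that the cozero-set $X\setminus Z(f)$ is relatively pseudocompact with respect to $C_c(X)$. Indeed, once $cl_{\beta_0X}(X\setminus Z(f))\subseteq \upsilon_0X$ is established, the equality $\beta_0X=cl_{\beta_0X}Z(f)\cup cl_{\beta_0X}(X\setminus Z(f))$ (obtained by closing the decomposition $X=Z(f)\cup(X\setminus Z(f))$ in the compact space $\beta_0X$) gives
$$\beta_0X\setminus\upsilon_0X\subseteq \beta_0X\setminus cl_{\beta_0X}(X\setminus Z(f))\subseteq cl_{\beta_0X}Z(f),$$
and the middle set is open, hence contained in $int_{\beta_0X}cl_{\beta_0X}Z(f)$.

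To prove that $X\setminus Z(f)$ is relatively pseudocompact, I argue by contradiction, assuming some $g\in C_c(X)$ is unbounded on $X\setminus Z(f)$. Corollary \ref{2.7} then produces a countable set $D=\{x_n:n\in\Bbb{N}\}\subseteq X\setminus Z(f)$ that is homeomorphic to $\Bbb{N}$ and $C_c$-embedded in $X$, with $g$ tending to infinity along $D$; in particular $D$ is closed and discrete in $X$. Since $D$ is $C_c$-embedded and disjoint from the zero-set $Z(f)$, Proposition \ref{2.5} supplies a clopen $U\subseteq X$ with $D\subseteq U$ and $Z(f)\subseteq X\setminus U$. Because $X$ is two-embedded in $\beta_0X$, the closures $cl_{\beta_0X}U$ and $cl_{\beta_0X}(X\setminus U)$ form a clopen partition of $\beta_0X$; consequently $cl_{\beta_0X}D$ and $cl_{\beta_0X}Z(f)$ are disjoint in $\beta_0X$.

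It remains to exhibit a point of $cl_{\beta_0X}D\cap(\beta_0X\setminus\upsilon_0X)$, for such a point would lie in $cl_{\beta_0X}Z(f)$ by the hypothesis of the theorem, contradicting the disjointness just established. Since $D$ is infinite discrete while $\beta_0X$ is compact, $cl_{\beta_0X}D\supsetneq D$; it therefore suffices to prove that $cl_{\beta_0X}D\cap\upsilon_0X=D$. To that end, extend the enumeration $\phi:D\to\Bbb{N}$, $x_n\mapsto n$, which plainly lies in $C_c(D)$, to some $\Phi\in C_c(X)$ using the $C_c$-embeddedness of $D$, and then further to $\Phi^*$ defined on $\upsilon_0X$ by Lemma \ref{1.2}. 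For any $q\in cl_{\beta_0X}D\cap\upsilon_0X$ and any net $d_\alpha\to q$ in $D$, continuity of $\Phi^*$ on $\upsilon_0X$ forces $\phi(d_\alpha)=\Phi^*(d_\alpha)\to\Phi^*(q)\in\Bbb{R}$; since $\Bbb{N}$ is closed and discrete in $\Bbb{R}$, the integer values $\phi(d_\alpha)$ must be eventually constant, so $q\in D$. The main obstacle is precisely this last step, namely relating the $C_c$-embeddedness of $D$ to the way $cl_{\beta_0X}D$ sits with respect to $\upsilon_0X$; it is overcome by combining the image-preserving extension of Lemma \ref{1.2} with the discreteness of $\Bbb{N}$ in $\Bbb{R}$.
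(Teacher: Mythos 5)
Your proof is correct, and it reaches the conclusion by a route that is organized differently from the paper's, even though both arguments run on the same engine (Corollary \ref{2.7} to extract a $C_c$-embedded copy of $\Bbb{N}$ inside $X\setminus Z(f)$, Proposition \ref{2.5} to separate it from $Z(f)$ by a clopen partition, and then a cluster point of that copy lying in $\beta_0X\setminus \upsilon_0X$ to contradict the hypothesis). The paper argues pointwise: for each $p\in\beta_0X\setminus\upsilon_0X$ it builds, via Lemma \ref{1.3}(a), a zero-set $Z(h)$ of $\beta_0X$ through $p$ missing $\upsilon_0X$, and shows $Z(h)\cap cl_{\beta_0X}(X\setminus Z(f))=\emptyset$ by using the specific unbounded function $\frac{1}{h}$; the cluster point outside $\upsilon_0X$ is produced because the discrete set is closed and noncompact in $\beta_0X\setminus Z(h)$, so its closure must meet $Z(h)$. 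You instead prove the global statement that $X\setminus Z(f)$ is relatively pseudocompact with respect to $C_c(X)$ and then invoke Proposition \ref{2.7.5} to conclude $cl_{\beta_0X}(X\setminus Z(f))\subseteq\upsilon_0X$, which makes the open set $\beta_0X\setminus cl_{\beta_0X}(X\setminus Z(f))$ do all the work at once; and you locate the cluster point outside $\upsilon_0X$ by a different device, namely extending the integer-valued enumeration of $D$ to $\upsilon_0X$ via Lemma \ref{1.2} and using discreteness of $\Bbb{N}$ in $\Bbb{R}$ to get $cl_{\beta_0X}D\cap\upsilon_0X=D$. Your version buys a cleaner logical structure (it isolates the reusable facts that a $C_c$-embedded copy of $\Bbb{N}$ satisfies $cl_{\beta_0X}D\cap\upsilon_0X=D$, and that $f\in M_c^{\beta_0X\setminus\upsilon_0X}$ forces relative pseudocompactness of the cozero-set, which is exactly what feeds Theorem \ref{2.9} in the proof of Theorem \ref{2.11}); the paper's version is more self-contained at the point $p$ and avoids appealing to Proposition \ref{2.7.5}. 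The only step you state tersely is the passage from ``$\phi(d_\alpha)$ eventually constant'' to $q\in D$, which needs injectivity of $\phi$ so that the net itself is eventually constant; that is immediate, so there is no gap.
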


\begin{proof}
Suppose that $p\in\beta_0X\setminus \upsilon_0X$. There exists a sequence $\{V_n: n\in \Bbb{N}\}$ of clopen neighborhoods of $p$ in $\beta_0X$ such that $\left(\cap_{n=1}^{\infty}V_n\right)\cap \upsilon_0X=\emptyset$. By part (a) of Lemma \ref{1.3}, there exists $h\in C_c(\beta_0X)$ such that $ Z(h)=\cap_{n=1}^{\infty}V_n$. Put $T=\beta_0X\setminus Z(h)$. Consider the function $\frac{1}{h}$ on $T$. If $Z(h)$ meets $cl_{\beta_0X}(X\setminus Z(f))$, then $\frac{1}{h}$ which is continuous on $T$ is unbounded on $X\setminus Z(f)$. Hence by Corollary \ref{2.7}, there exists a countable closed set $S\subseteq X\setminus Z(f)$ which is $C_c$-embedded in $T$. Thus $S$ is $C_c$-embedded in $X$ and by Proposition \ref{2.5}, there exists a clopen set $O\subseteq X$ such that $S\subseteq O$ and $Z(f)\subseteq X\setminus O$. Therefore $cl_{\beta_0X}S\cap cl_{\beta_0X}Z(f)=\emptyset$. Since $S$ is closed in $T$ and also is noncompact, there exists $p\in cl_{\beta_0X}S\setminus T$. This implies that $p\in Z(h)$ and $p\notin cl_{\beta_0X}Z(f)$ which is a contradiction. Hence $Z(h)\cap cl_{\beta_0X}(X\setminus Z(f))=\emptyset$ and so $Z(h)\subseteq \beta_0X\setminus cl_{\beta_0X}(X\setminus Z(f))\subseteq cl_{\beta_0X}Z(f)$. Therefore $cl_{\beta_0X}Z(f)$ is a neighborhood of $p$.
\end{proof}

Now we are ready to characterize another important ideal in $C_c(X)$. We denote by  $C_c^{\psi}(X)$, the set of all functions in $C_c(X)$ with pseudocompact support.

\begin{thm}\label{2.11}
Let $X$ be a zero-dimensional space. Then $C_c^{\psi}(X)=M_c^{\beta_0X\setminus \upsilon_0X}=O_c^{\beta_0X\setminus \upsilon_0X}$.
\end{thm}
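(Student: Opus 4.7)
The plan is to prove the chain $C_c^{\psi}(X)\subseteq O_c^{\beta_0X\setminus \upsilon_0X}\subseteq M_c^{\beta_0X\setminus \upsilon_0X}\subseteq C_c^{\psi}(X)$. The middle inclusion is pointwise automatic since $O_c^p\subseteq M_c^p$ for every $p$, so only the first and third inclusions need work. Essentially all the machinery is already packaged in Proposition \ref{2.7.5}, Theorem \ref{2.9}, Theorem \ref{2.10}, and Corollary \ref{1.4}; the argument is bookkeeping around these.

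For $C_c^{\psi}(X)\subseteq O_c^{\beta_0X\setminus \upsilon_0X}$, I would start with $f\in C_c^{\psi}(X)$, so that the support $S=cl_X(X\setminus Z(f))$ is pseudocompact. Then $S$ is automatically relatively pseudocompact with respect to $C_c(X)$, since any $g\in C_c(X)$ restricts to a continuous, hence bounded, function on $S$. Proposition \ref{2.7.5} gives $cl_{\beta_0X}(X\setminus Z(f))=cl_{\beta_0X}S\subseteq \upsilon_0X$, so the open set $U:=\beta_0X\setminus cl_{\beta_0X}(X\setminus Z(f))$ contains $\beta_0X\setminus \upsilon_0X$. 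Density of $X$ in $\beta_0X$ gives $U\subseteq cl_{\beta_0X}(U\cap X)\subseteq cl_{\beta_0X}Z(f)$, and $U$ being open forces $U\subseteq int_{\beta_0X}cl_{\beta_0X}Z(f)$. Hence $\beta_0X\setminus \upsilon_0X\subseteq int_{\beta_0X}cl_{\beta_0X}Z(f)$, i.e., $f\in O_c^{\beta_0X\setminus \upsilon_0X}$.

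For $M_c^{\beta_0X\setminus \upsilon_0X}\subseteq C_c^{\psi}(X)$, take $f\in M_c^{\beta_0X\setminus \upsilon_0X}$, so $\beta_0X\setminus \upsilon_0X\subseteq cl_{\beta_0X}Z(f)$. Theorem \ref{2.10} upgrades this to $\beta_0X\setminus \upsilon_0X\subseteq int_{\beta_0X}cl_{\beta_0X}Z(f)$. Choose via Corollary \ref{1.4} a zero-set $Z'\in Z_c[\beta_0X]$ with $Z'\cap X=Z(f)$; since $Z(f)\subseteq Z'$ and $Z'$ is closed, $cl_{\beta_0X}Z(f)\subseteq Z'$, whence $\beta_0X\setminus \upsilon_0X\subseteq int_{\beta_0X}Z'$. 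Complementing yields $cl_{\beta_0X}(\beta_0X\setminus Z')\subseteq \upsilon_0X$, and combining $X\cap(\beta_0X\setminus Z')=X\setminus Z(f)$ with density of $X$ rewrites the left-hand side as $cl_{\beta_0X}(X\setminus Z(f))$. Proposition \ref{2.7.5} then says $X\setminus Z(f)$ is relatively pseudocompact with respect to $C_c(X)$, and Theorem \ref{2.9} promotes this to pseudocompactness of the support, so $f\in C_c^{\psi}(X)$.

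I expect no real obstacle: each direction reduces to one appeal of Proposition \ref{2.7.5} plus standard density manipulations. The only mildly delicate point is recognizing in the reverse direction that $cl_{\beta_0X}(\beta_0X\setminus Z')$ coincides with $cl_{\beta_0X}(X\setminus Z(f))$ thanks to the density of $X$ in $\beta_0X$ and the openness of $\beta_0X\setminus Z'$; after that, Theorem \ref{2.9} carries us across the pseudocompactness threshold.
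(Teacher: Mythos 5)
Your argument is correct, and its reverse inclusion $M_c^{\beta_0X\setminus \upsilon_0X}\subseteq C_c^{\psi}(X)$ is essentially the paper's: Theorem \ref{2.10} to pass from closure to interior, Corollary \ref{1.4} to produce $Z'$, a density computation identifying $cl_{\beta_0X}(\beta_0X\setminus Z')$ with $cl_{\beta_0X}(X\setminus Z(f))$, then Proposition \ref{2.7.5} and Theorem \ref{2.9}. Where you genuinely diverge is the forward inclusion. The paper proves $C_c^{\psi}(X)\subseteq M_c^{\beta_0X\setminus \upsilon_0X}$ by extending $f$ to $\upsilon_0X$, noting that $S(f^{\upsilon_0})$ is a pseudocompact cozero-set of the realcompact space $\upsilon_0X$, hence realcompact (citing Gillman--Jerison), hence compact, and then decomposing $\beta_0X=cl_{\beta_0X}Z(f)\cup S(f^{\upsilon_0})$. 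You instead prove the stronger inclusion $C_c^{\psi}(X)\subseteq O_c^{\beta_0X\setminus \upsilon_0X}$ directly: pseudocompactness of the support gives relative pseudocompactness with respect to $C_c(X)$, Proposition \ref{2.7.5} pushes $cl_{\beta_0X}(X\setminus Z(f))$ into $\upsilon_0X$, and the complementary open set $U$, which by density of $X$ satisfies $U\subseteq cl_{\beta_0X}(U\cap X)\subseteq cl_{\beta_0X}Z(f)$, witnesses membership in $O^p_c$ for every $p\in\beta_0X\setminus\upsilon_0X$. Your route stays entirely inside the $C_c$-machinery already developed in the paper and avoids the external appeal to realcompactness of cozero-sets and the fact that pseudocompact realcompact spaces are compact; the paper's version buys the extra structural information that $S(f^{\upsilon_0})$ is actually a compact subset of $\upsilon_0X$. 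Both arguments are sound, and your chain $C_c^{\psi}(X)\subseteq O_c^{\beta_0X\setminus \upsilon_0X}\subseteq M_c^{\beta_0X\setminus \upsilon_0X}\subseteq C_c^{\psi}(X)$ delivers all three equalities at once.
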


\begin{proof}
By Theorem \ref{2.10}, the second equality is clear. Assume that $f\in C_c^{\psi}(X)$. Since $S(f)=X\setminus Z(f)$ is pseudocompact and $\left(\upsilon_0X\setminus Z(f^{\upsilon_0})\right)\cap X=X\setminus Z(f)$, the subset $\upsilon_0X\setminus Z(f^{\upsilon_0})$ is also pseudocompact. Note that $S(f^{\upsilon_0})=\upsilon_0X\setminus Z(f^{\upsilon_0})$ is a cozero-set of the real compact space $\upsilon_0X$, and hence it should be realcompact, see \cite[Corollary 8.14]{GJ}. Therefore $S(f^{\upsilon_0})$ must be compact. We observe that $$\beta_0X=cl_{\beta_0X}Z(f)\cup cl_{\beta_0X}S(f)=cl_{\beta_0X}Z(f)\cup S(f^{\upsilon_0}),$$ and hence we have $\beta_0X\setminus \upsilon_0X\subseteq cl_{\beta_0X}Z(f)$. Thus $f\in M_c^{\beta_0X\setminus \upsilon_0X}$. Now if $f\in O_c^{\beta_0X\setminus \upsilon_0X}$, there exists a compact set $K$ such that $$\beta_0X\setminus \upsilon_0X\subseteq \beta_0X\setminus K\subseteq cl_{\beta_0X}Z(f),$$ and hence $X\setminus Z(f)\subseteq \beta_0X\setminus cl_{\beta_0X}Z(f)\subseteq K\subseteq \upsilon_0X$. Since $X\setminus Z(f)$ is relatively pseudocompact with respect to $C_c(X)$, by Theorem \ref{2.9},  the set $X\setminus Z(f)$ is pseudocompact.
\end{proof}

For an $\Bbb{N}$-compact space, Theorem \ref{2.4} and Theorem \ref{2.11} imply the following corollary which says that in an $\Bbb{N}$-compact space, $C_c^K(X)$ equals to that intersection of all free maximal ideals in $C_c(X)$.

\begin{cor}\label{2.12}
If $X$ is an $\Bbb{N}$-compact space, then $C_c^K(X)=M_c^{\beta_0X\setminus X}$.
\end{cor}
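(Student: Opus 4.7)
The plan is to observe that the corollary is essentially a direct splicing of Theorem \ref{2.4} and Theorem \ref{2.11} once one recognizes that $\Bbb{N}$-compactness forces the Banaschewski and $\upsilon_0$-remainders to coincide. First I would record the trivial but crucial identification: by Theorem \ref{1.1}, an $\Bbb{N}$-compact space $X$ is its own $\Bbb{N}$-compactification, so $\upsilon_0 X = X$ and consequently $\beta_0 X \setminus \upsilon_0 X = \beta_0 X \setminus X$. This immediately yields $M_c^{\beta_0 X \setminus \upsilon_0 X} = M_c^{\beta_0 X \setminus X}$ and similarly for the $O_c$-ideals.

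Next I would plug this identification into Theorem \ref{2.11} to obtain the chain
$$
M_c^{\beta_0 X \setminus X} \;=\; M_c^{\beta_0 X \setminus \upsilon_0 X} \;=\; O_c^{\beta_0 X \setminus \upsilon_0 X} \;=\; O_c^{\beta_0 X \setminus X},
$$
and then invoke Theorem \ref{2.4} to replace the rightmost term by $C_c^K(X)$. Concatenating these two lines delivers $C_c^K(X) = M_c^{\beta_0 X \setminus X}$, which is the statement of the corollary.

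There is no genuine obstacle here; the only thing worth a sanity check is that pseudocompact support really does collapse to compact support in the $\Bbb{N}$-compact setting, which is what makes Theorems \ref{2.4} and \ref{2.11} interlock. This collapse is automatic: every $\Bbb{N}$-compact space is realcompact, and in a realcompact space a cozero-set whose closure is pseudocompact is in fact compact (a pseudocompact realcompact closed subspace must be compact, see \cite[Corollary 8.14]{GJ}). Hence $C_c^K(X) = C_c^{\psi}(X)$ for $\Bbb{N}$-compact $X$, confirming that the two characterizations are compatible and giving an alternative route to the same conclusion.
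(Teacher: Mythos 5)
Your proof is correct and follows exactly the paper's route: the paper likewise obtains the corollary by noting that $\Bbb{N}$-compactness gives $\upsilon_0X=X$ and then splicing Theorem \ref{2.4} with Theorem \ref{2.11}. Your closing sanity check that pseudocompact support collapses to compact support is also consistent with the argument already embedded in the proof of Theorem \ref{2.11}.
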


\section{ Remainder of Banaschewski compactification via $C_c(X)$.}

In this section, first we want to find the least cardinality of the remainder $\beta_0 X\setminus \upsilon_0 X$. In other point of view, we show that for a zero-dimensional non pseudocompact space $X$, we have at least $2^{2^{\aleph_0}}$ clopen ultrafilters on $X$ which do not have countable intersection property. In the rest of this section we observe that the remainder of the Banaschewski compactification of an $\Bbb{N}$-compact space is an almost $P$-space and a connection between the remainder and  Parovi$\breve{\mbox{c}}$enko spaces are given. Finally, for a zero-dimensional, locally compact space $X$ which is not pseudocompact, we give a lower bound for the cellularity of spaces $\beta_0X\setminus \upsilon_0X$ and $\beta_0X\setminus X$. The following two lemmas are needed in the sequel. The second one is a consequence of Proposition \ref{2.8}.

\begin{lem}\label{3.1}
Each $C_c$-embedded subset $S$ of $X$ is two-embedded.
\end{lem}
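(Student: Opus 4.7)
The plan is to reduce the two-valued extension problem to the $C_c$-embedding hypothesis and then cut down the resulting countable-valued extension to a clopen partition of $X$. First I would take an arbitrary continuous two-valued function $f\colon S\to\{0,1\}$ and observe that since its image has at most two points, $f\in C_c(S)$. By the assumption that $S$ is $C_c$-embedded in $X$, there is some $F\in C_c(X)$ with $F|_S=f$; however, $F$ is only guaranteed to be real-valued with countable image, so the task becomes producing an honest two-valued extension from $F$.

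The key step is to exploit countability of $F(X)$. Because $F(X)$ is countable, the interval $(0,1)$ contains some $r\notin F(X)$. Then $U=F^{-1}\bigl((-\infty,r)\bigr)=F^{-1}\bigl((-\infty,r]\bigr)$, which is simultaneously open and closed in $X$, hence clopen. Restricted to $S$, the values of $F$ are only $0$ and $1$, so $U\cap S=f^{-1}(0)$ and $(X\setminus U)\cap S=f^{-1}(1)$. Therefore the characteristic function $\chi_{X\setminus U}\colon X\to\{0,1\}$ is a continuous two-valued function that agrees with $f$ on $S$, which is exactly what two-embeddedness asks for.

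The only potentially subtle point is the standard observation that, for a continuous real-valued $F$ and a real number $r$ not in its image, the set $F^{-1}((-\infty,r))$ is clopen; this is immediate from $F^{-1}((-\infty,r))=F^{-1}((-\infty,r])$ together with continuity of $F$. Everything else is bookkeeping, so I do not anticipate a genuine obstacle: the content of the lemma is precisely that being able to extend countable-valued functions is a formally stronger property than being able to extend two-valued functions, and the witness $r\notin F(X)$ provides the clopen separation needed to pass from the former to the latter.
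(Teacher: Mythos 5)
Your proof is correct and follows essentially the same route as the paper's: extend the two-valued $f$ to some $F\in C_c(X)$, use countability of $F(X)$ to pick $r\in(0,1)\setminus F(X)$, and observe that $U=F^{-1}((-\infty,r))$ is clopen and separates $f^{-1}(0)$ from $f^{-1}(1)$, so the characteristic function of $X\setminus U$ is the desired two-valued extension. No substantive differences.
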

\begin{proof}
Let $f:S\rightarrow \{0,1\}$ be a continuous two-valued function, then $f\in C_c(X)$. Therefore there exists $G\in C_c(X)$ such that $G|_S=f$. The image of $G$ is countable and there exists a real number $0<r<1$ such that $r\notin G(X)$. The subset $U=G^{-1}(\infty, r)$ is clopen in $X$ and $f^{-1}(0)\subseteq U$ and $f^{-1}(1)\subseteq X\setminus U$. Define $F:X\rightarrow \{0,1\}$ to be 0 on $U$ and 1 on $X\setminus U$. The function $F$ is continuous and two-valued whose restriction to $S$ is $f$.
\end{proof}

\begin{lem}\label{3.2}
Let $X$ be a zero-dimensional space. Then $X$ is pseudocompact if and only if $\beta_0 X=\upsilon_0 X$.
\end{lem}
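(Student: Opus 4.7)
The plan is to prove both implications using Theorem~\ref{1.1} (the universal property of $\upsilon_0X$) together with Proposition~\ref{2.8} (pseudocompactness characterisation via continuous surjections onto $\mathbb{N}$).

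For the forward direction, assume $X$ is pseudocompact. My strategy is to verify that $\beta_0X$ itself satisfies the defining property of $\upsilon_0X$ from Theorem~\ref{1.1}; by uniqueness this will force $\beta_0X \cong \upsilon_0X$. So I take any continuous $f: X \to \mathbb{Z}$ and show it extends to $\beta_0X$. Since $X$ is pseudocompact, $f$ has bounded image as a real-valued function, so $f(X)$ is a bounded subset of $\mathbb{Z}$, hence finite. Consequently $f$ is a finite $\mathbb{Z}$-linear combination of characteristic functions of disjoint clopen sets $A_1,\dots,A_k$ covering $X$. Each $\chi_{A_i}$ extends continuously to $\beta_0X$ by the defining property of the Banaschewski compactification (two-valued functions extend), and thus so does $f$.

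For the reverse direction, I argue by contrapositive. Assume $X$ is not pseudocompact. By Proposition~\ref{2.8} there is a continuous surjection $f: X \to \mathbb{N}$. Because $\mathbb{N}$ is $\mathbb{N}$-compact, the universal property of $\upsilon_0X$ furnishes a continuous extension $F: \upsilon_0X \to \mathbb{N}$. If we had $\upsilon_0X = \beta_0X$, then $\upsilon_0X$ would be compact, so $F(\upsilon_0X)$ would be a compact, hence finite, subset of the discrete space $\mathbb{N}$; but $F(\upsilon_0X) \supseteq f(X) = \mathbb{N}$, a contradiction.

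There is really no hard step here; the main thing to get right is invoking the right universal property in each direction (Theorem~\ref{1.1} for defining $\upsilon_0X$ via continuous maps into $\mathbb{Z}$, and the Banaschewski property for lifting characteristic functions of clopen sets). The only subtle point in the forward direction is the passage from \emph{bounded image in $\mathbb{R}$} to \emph{finite image in $\mathbb{Z}$}, which is immediate once noticed.
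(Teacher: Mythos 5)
Your argument is correct, and the reverse direction coincides with the paper's (a continuous surjection onto $\Bbb{N}$ from Proposition~\ref{2.8} extends to $\upsilon_0X$, which cannot happen if $\upsilon_0X=\beta_0X$ is compact). The forward direction, however, is a genuinely different route. The paper argues by contradiction: it picks $p\in\beta_0X\setminus\upsilon_0X$, takes a sequence of clopen neighborhoods of $p$ whose intersection misses $\upsilon_0X$, uses Lemma~\ref{1.3}(a) to produce $f\in C_c(\beta_0X)$ with that intersection as its zero-set, and observes that $g=f|_X$ is a unit of $C_c(X)$ whose reciprocal is unbounded on $X$ (since $g$ tends to $0$ along $X$ approaching $p$), contradicting pseudocompactness. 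You instead verify directly that $\beta_0X$ satisfies the characterization of $\upsilon_0X$ in Theorem~\ref{1.1}: every continuous $f:X\to\Bbb{Z}$ is bounded by pseudocompactness, hence has finite image, hence is a finite $\Bbb{Z}$-combination of characteristic functions of clopen sets, each of which extends to $\beta_0X$ by the defining property of the Banaschewski compactification. This is cleaner and avoids the $C_c$-machinery entirely, but it has one unstated hypothesis: Theorem~\ref{1.1} applies only to \emph{$\Bbb{N}$-compact} extensions $T$, so you must note that $\beta_0X$ is $\Bbb{N}$-compact (it is compact and zero-dimensional, hence strongly zero-dimensional and realcompact, hence $\Bbb{N}$-compact by the fact quoted in the paper's preliminaries). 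With that one line added, and the remark that a compact dense subspace of a Hausdorff space is the whole space (so the homeomorphism $\beta_0X\cong\upsilon_0X$ forces the equality $\upsilon_0X=\beta_0X$), your proof is complete.
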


\begin{proof}
For the necessity, suppose that $X$ is pseudocompact and $\beta_0 X\setminus \upsilon_0 X\neq \emptyset$. For $p\in \beta_0 X\setminus \upsilon_0 X$, there exists a countable set consisting of clopen  neighborhoods of $p$, say $\{U_n: n\in \Bbb{N}\}$, in $\beta_0 X$ with $\left(\cap_{n=1}^{\infty}U_n\right) \cap \upsilon_0 X=\emptyset$. By part (a) of Lemma \ref{1.3}, there exists a function $f\in C_c(\beta_0 X)$ such that $Z(f)=\cap_{n=1}^{\infty}U_n$. If we restrict $f$ to $X$, then for the function $g=f|_X\in C_c(X)$, we have $Z(g)=\emptyset$. Hence $g$ is a unit of $C_c(X)$. The function $\frac{1}{g}$ belongs to $C_c(X)$ and clearly $g$ is unbounded on $X$ which is a contradiction.\\ For the sufficiency, suppose that $X$ is not pseudocompact. Hence by Proposition \ref{2.8}, there exists a continuous and onto map $f:X\rightarrow \Bbb{N}$. The function $f$ has countable image and therefore has an extension to $\upsilon_0 X$. But   $\upsilon_0 X$ is compact and the extension of $f$ is unbounded which is a contradiction. This completes the proof of our lemma.
\end{proof}

Now we are ready to present one of our main theorems in this section.  This result is notable whenever we deal with zero-dimensional spaces which are not strongly zero-dimensional.

\begin{thm}\label{3.3}
For a zero-dimensional space $X$,  let $f\in C_c(\beta_0 X)$ and $Z(f)\cap X=\emptyset$. Then $Z(f)$ contains a copy of $\beta \Bbb{N}$, the Stone-$\breve{\mbox{C}}$ech compactification of $\Bbb{N}$, and therefore its cardinality is at least $2^{2^{\aleph_0}}$.
\end{thm}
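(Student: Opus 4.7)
The plan is to exploit $Z(f)\cap X=\emptyset$ to force $1/f|_X$ to be unbounded on $X$, apply Corollary \ref{2.7} to produce a $C_c$-embedded copy of $\Bbb{N}$ inside $X$ on which $f\to 0$, identify the remainder of its closure in $\beta_0 X$ as a copy of $\Bbb{N}^{*}$ sitting inside $Z(f)$, and then invoke the classical fact that $\Bbb{N}^{*}$ contains a copy of $\beta\Bbb{N}$.

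First I set $g=f|_X$. Since $Z(f)\cap X=\emptyset$, $g$ is a unit of $C_c(X)$ and $h:=1/g\in C_c(X)$. Assuming $Z(f)\neq\emptyset$ (otherwise there is nothing to prove), any $p\in Z(f)$ is a limit in $\beta_0 X$ of a net from the dense set $X$, and continuity of $f$ forces $g$ to vanish along that net; hence $h$ is unbounded on $X$. Applying Corollary \ref{2.7} with $E=X$ yields a countable discrete $D\subseteq X$, homeomorphic to $\Bbb{N}$, $C_c$-embedded in $X$, along which $h\to\infty$ and hence $f\to 0$. By Lemma \ref{3.1}, $D$ is two-embedded in $X$; since $X$ is two-embedded in $\beta_0 X$ by the defining property of the Banaschewski compactification, $D$ is also two-embedded in $\beta_0 X$. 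Therefore $cl_{\beta_0 X}D$ is a zero-dimensional compactification of $D\cong\Bbb{N}$ in which every continuous two-valued function on $D$ extends, so the uniqueness clause in the characterisation of $\beta_0$ gives $cl_{\beta_0 X}D\cong\beta_0\Bbb{N}=\beta\Bbb{N}$ (the last equality because $\Bbb{N}$ is strongly zero-dimensional).

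Because $f$ is continuous on $\beta_0 X$ and vanishes in the limit along $D$, every accumulation point of $D$ in $\beta_0 X$ lies in $Z(f)$, so $cl_{\beta_0 X}D\setminus D\subseteq Z(f)$; this remainder is a homeomorphic copy of $\Bbb{N}^{*}=\beta\Bbb{N}\setminus\Bbb{N}$. Invoking the classical embedding $\beta\Bbb{N}\hookrightarrow\Bbb{N}^{*}$ then produces a copy of $\beta\Bbb{N}$ inside $Z(f)$ and yields the cardinality bound $|Z(f)|\geq 2^{2^{\aleph_0}}$. I expect the delicate step to be the identification $cl_{\beta_0 X}D\cong\beta\Bbb{N}$: one must transfer the two-embedding of $D$ from $X$ up to $\beta_0 X$ and then correctly apply the universal property of Banaschewski compactification to the countable discrete space $D$ itself; once that is in hand the remaining pieces are a routine continuity argument and the classical embedding $\beta\Bbb{N}\hookrightarrow\Bbb{N}^{*}$.
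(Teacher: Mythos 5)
Your proposal is correct and follows essentially the same route as the paper: restrict $f$ to $X$, invert to get an unbounded element of $C_c(X)$, use Corollary \ref{2.7} and Lemma \ref{3.1} to produce a $C_c$-embedded (hence two-embedded) copy of $\Bbb{N}$ whose closure in $\beta_0X$ is $\beta_0\Bbb{N}=\beta\Bbb{N}$, and observe that its remainder lands in $Z(f)$. Your write-up is in fact slightly more careful than the paper's at two points -- justifying why $1/(f|_X)$ is unbounded when $Z(f)\neq\emptyset$, and explicitly invoking the classical embedding of $\beta\Bbb{N}$ into $\Bbb{N}^{*}$ to get the ``contains a copy of $\beta\Bbb{N}$'' clause rather than only the cardinality bound -- but these are refinements of the same argument, not a different one.
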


\begin{proof}
Denote the restriction of $f$ to $X$ by $g$. So $Z(g)=\emptyset$ and $g$ belongs to $C_c(X)$. This function has an inverse in $C_c(X)$, say $\frac{1}{g}$. Obviously  $\frac{1}{g}$ is unbounded on $X$. Hence by Corollary \ref{2.7}, there exists a $C_c$-embedded copy of $\Bbb{N}$ in $X$ on which  the function  $\frac{1}{g}$ tends to infinity.  Since $\Bbb{N}$ is $C_c$-embedded in $X$,  by Lemma \ref{3.1}, it is also two-embedded in $X$ and therefore two-embedded in $\beta_0 X$. Thus every continuous two-valued function on $\Bbb{N}$ is two-embedded in $cl_{\beta_0 X}\Bbb{N}$. By the uniqueness theorem in the existence of Banaschewski compactification,  $cl_{\beta_0 X}\Bbb{N}$ and $\beta_0 \Bbb{N}$ are homeomorphic, and also we have $cl_{\beta_0 X}\Bbb{N}\setminus \Bbb{N}\subseteq Z(f)$. The discrete space $\Bbb{N}$ is strongly zero-dimensional and hence $\beta_0 \Bbb{N}=cl_{\beta_0 X}\Bbb{N}=\beta \Bbb{N}$. The cardinality of $\beta\Bbb{N}$ equals to $2^{2^{\aleph_0}}$ and therefore $|Z(f)|\geq 2^{2^{\aleph_0}}$.
\end{proof}

Since each $G_{\delta}$-point is a zero-set, Lemma \ref{1.3} and Theorem \ref{3.3}  imply the following corollary.

\begin{cor}\label{3.7}
For a zero-dimensional space $X$, no point $p\in \beta_0 X\setminus X$ is a $G_{\delta}$-point of $\beta_0 X$.
\end{cor}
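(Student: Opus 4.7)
The plan is to argue by contradiction: assume $p\in\beta_0X\setminus X$ is a $G_\delta$-point of $\beta_0X$, so that $\{p\}=\bigcap_{n=1}^\infty U_n$ for some sequence of open sets $U_n$ in $\beta_0X$. The first step is to upgrade this to a \emph{clopen} representation: since $\beta_0X$ is zero-dimensional, for each $n$ I can pick a clopen neighborhood $V_n$ of $p$ with $V_n\subseteq U_n$, and then $\{p\}\subseteq\bigcap_{n=1}^\infty V_n\subseteq\bigcap_{n=1}^\infty U_n=\{p\}$, so $\{p\}=\bigcap_{n=1}^\infty V_n$ with each $V_n$ clopen.

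Next I would apply Lemma \ref{1.3}(a) to this sequence, viewed inside $\beta_0X$ (which is itself a zero-dimensional space to which the lemma applies), to obtain a function $f\in C_c(\beta_0X)$ with $Z(f)=\bigcap_{n=1}^\infty V_n=\{p\}$. Since $p\notin X$, this gives $Z(f)\cap X=\emptyset$, putting us squarely into the hypothesis of Theorem \ref{3.3}.

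By Theorem \ref{3.3}, $Z(f)$ must contain a copy of $\beta\mathbb{N}$, hence $|Z(f)|\geq 2^{2^{\aleph_0}}$. But $Z(f)=\{p\}$ has cardinality $1$, the desired contradiction.

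There is essentially no obstacle here; the only small point requiring care is the reduction from a general $G_\delta$-representation of $\{p\}$ by open sets to one by clopen sets, which uses the zero-dimensionality of $\beta_0X$. Everything else is a direct invocation of Lemma \ref{1.3}(a) and Theorem \ref{3.3}.
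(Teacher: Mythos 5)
Your proposal is correct and follows exactly the route the paper intends: pass from a $G_\delta$-point to a clopen $G_\delta$-representation, use Lemma \ref{1.3}(a) to realize $\{p\}$ as $Z(f)$ for some $f\in C_c(\beta_0X)$ with $Z(f)\cap X=\emptyset$, and contradict the cardinality bound of Theorem \ref{3.3}. The paper states this only as a one-line remark; your write-up just fills in the same steps explicitly.
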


In the next result, by applying Theorem \ref{3.3}, we give the least cardinality of the remainder $\beta_0X\setminus \upsilon_0X$, whenever $X$ is zero-dimensional and non pseudocompact space.

\begin{prop}\label{3.8} 
Let $X$ be a zero-dimensional and non pseudocompact space. The remainder $\beta_0 X\setminus \upsilon_0 X$ has at least  $2^{2^{\aleph_0}}$ points.
\end{prop}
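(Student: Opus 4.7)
The plan is to use the non-pseudocompactness of $X$ to locate a single point $p$ in $\beta_0 X \setminus \upsilon_0 X$, then blow this single point up into a zero-set of $\beta_0 X$ of size at least $2^{2^{\aleph_0}}$, all of which lies in the remainder.

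First, I would apply Lemma \ref{3.2}: since $X$ is not pseudocompact, $\beta_0 X \setminus \upsilon_0 X \neq \emptyset$. Pick any $p \in \beta_0 X \setminus \upsilon_0 X$. By the characterization of points outside $\upsilon_0 X$ noted in the Preliminaries, there exists a decreasing sequence $\{V_n : n \in \Bbb{N}\}$ of clopen neighborhoods of $p$ in $\beta_0 X$ such that $\bigl(\bigcap_{n=1}^{\infty} V_n\bigr) \cap \upsilon_0 X = \emptyset$. By part (a) of Lemma \ref{1.3} (applied in the zero-dimensional space $\beta_0 X$), there exists $f \in C_c(\beta_0 X)$ with $Z(f) = \bigcap_{n=1}^{\infty} V_n$.

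Now $Z(f) \cap \upsilon_0 X = \emptyset$, and in particular $Z(f) \cap X = \emptyset$ since $X \subseteq \upsilon_0 X$. This puts us in position to invoke Theorem \ref{3.3}, which yields that $Z(f)$ contains a homeomorphic copy of $\beta\Bbb{N}$ and therefore $|Z(f)| \geq 2^{2^{\aleph_0}}$. Since $Z(f) \subseteq \beta_0 X \setminus \upsilon_0 X$, this gives the required lower bound on the cardinality of the remainder.

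The argument is essentially a packaging of tools already built up: the main conceptual work was done in Theorem \ref{3.3} (producing the $\beta\Bbb{N}$ copy inside a zero-set missing $X$), and the only additional observation needed here is that by choosing the zero-set to witness a point of $\beta_0 X \setminus \upsilon_0 X$ rather than just a point of $\beta_0 X \setminus X$, one can guarantee the entire $\beta\Bbb{N}$ copy sits inside $\beta_0 X \setminus \upsilon_0 X$. There is no real obstacle — the small subtlety to watch is just to apply Lemma \ref{1.3}(a) inside $\beta_0 X$ (which is itself zero-dimensional) rather than inside $X$, so that the resulting zero-set is a subset of $\beta_0 X$.
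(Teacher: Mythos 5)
Your proof is correct and follows the paper's strategy: both arguments produce a nonempty zero-set of $\beta_0X$ disjoint from $\upsilon_0X$ and then invoke Theorem \ref{3.3} to embed a copy of $\beta\Bbb{N}$ inside it. The only difference is in the bookkeeping: the paper manufactures the zero-set from a continuous surjection onto $\Bbb{N}$ (Proposition \ref{2.8}) extended over $\beta_0X$, whereas you obtain it from a point of the remainder (nonempty by Lemma \ref{3.2}) via Lemma \ref{1.3}(a) --- and the strengthened form of the remainder-point characterization you rely on (the intersection of the clopen neighborhoods misses $\upsilon_0X$, not merely $X$) is exactly the form the paper itself uses in Proposition \ref{2.7.5} and Corollary \ref{3.9}, so no gap arises there.
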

\begin{proof}
Since $X$ is a zero-dimensional and non pseudocompact space, By Proposition \ref{2.8}, there exists a continuous and onto function $f:\upsilon_0 X\rightarrow \Bbb{N}$. Therefore the function $g=\frac{1}{f}$ is continuous whose image equals to the set $\{\frac{1}{n}: n\in \Bbb{N}\}$. Note that the set $\{\frac{1}{n}: n\in \Bbb{N}\}\cup\{0\}$ is zero-dimensional and compact. Hence by \cite[    4.7, proposition (d) ]{PW}, $g$ has a continuous extension $G:\beta_0 X\rightarrow \{\frac{1}{n}: n\in \Bbb{N}\}\cup\{0\}$. For each $n\in \Bbb{N}$, choose $x_n$ such that $g(x_n)=\frac{1}{f(x_n)}=\frac{1}{n}$.  Each $g^{-1}(\frac{1}{n})$  is clopen in $\upsilon_0 X$ and hence the cluster points of the set $\{x_n: n\in \Bbb{N}\}$ is contained in $\beta_0 X\setminus \upsilon_0 X$. If we consider a cluster point  $p$ of the set $\{x_n: n\in \Bbb{N}\}$, then we observe that $G(p)=0$. Therefore $G\in C_c(\beta_0 X)$ and $Z(G)\neq \emptyset$. Since $Z(G)\cap \upsilon_0 X=\emptyset$, Theorem \ref{3.3} implies that $|Z(G)|\geq  2^{2^{\aleph_0}}$. Thus the cardinality of $\beta_0 X\setminus \upsilon_0 X$ is at least $2^{2^{\aleph_0}}$.
\end{proof}

We recall that the local weight of the space $X$ at a point $p$, denoted $\chi(p, X)$ or $\chi (p)$, is the
least cardinal equal to the cardinal number of a (filter) base for the
neighborhoods of $p$.   By Proposition \ref{3.8}, for a zero-dimensional non pseudocompact space $X$, we have the following corollary.

\begin{cor}\label{3.9}
Let $X$ be a zero-dimensional and non pseudocompact space. Then;\\(a) If $p\in \beta_0 X\setminus \upsilon_0 X$, then $\{p\}$ is not a zero-set of $\beta_0 X\setminus \upsilon_0 X$;\\(b) $\beta_0 X\setminus \upsilon_0 X$ has no isolated point;\\(c) The local weight of $\beta_0 X\setminus \upsilon_0 X$ at each of its points is uncountable.
\end{cor}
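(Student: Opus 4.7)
The strategy is to prove (a) directly, from which (b) and (c) follow quickly, by lifting a hypothetical zero-set representation of $\{p\}$ in the subspace $A:=\beta_0X\setminus\upsilon_0X$ into a $G_\delta$ representation of $\{p\}$ in the ambient compact space $\beta_0X$, and then invoking Corollary \ref{3.7}.

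For (a), I argue by contradiction: if $\{p\}=Z(f)$ for some $f\in C(A)$, then $\{p\}=\bigcap_n O_n$ with $O_n=f^{-1}(-1/n,1/n)$ open in $A$, and $O_n=U_n\cap A$ for some open $U_n\subseteq\beta_0X$. Because $p\in\beta_0X\setminus\upsilon_0X$, the identification of $\upsilon_0X$ with the clopen ultrafilters on $X$ having the countable intersection property furnishes a sequence $\{V_n\}$ of clopen neighborhoods of $p$ in $\beta_0X$ with $(\bigcap_n V_n)\cap\upsilon_0X=\emptyset$ (the same fact used in the proof of Proposition \ref{2.7.5}). Setting $W_n=U_n\cap V_n$, one gets $(\bigcap_n W_n)\cap\upsilon_0X=\emptyset$ from the $V_n$'s and $(\bigcap_n W_n)\cap A\subseteq\{p\}$ from the $U_n$'s, while $p\in W_n$ for every $n$. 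Since $\beta_0X=A\cup\upsilon_0X$, these force $\bigcap_n W_n=\{p\}$, exhibiting $p$ as a $G_\delta$-point of $\beta_0X$ and contradicting Corollary \ref{3.7}.

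Part (b) is immediate: an isolated $p$ in $A$ makes $\{p\}$ clopen in $A$, hence a zero-set (the zero-set of the characteristic function of its complement), directly conflicting with (a). For (c), a countable neighborhood base $\{O_n\}$ at $p$ in $A$ gives $\bigcap_n O_n=\{p\}$ by Hausdorffness, so $\{p\}$ is a $G_\delta$-point of $A$. Because $A$ inherits the Tychonoff property from $\beta_0X$, standard Urysohn-type functions $f_n\in C(A,[0,1])$ with $f_n(p)=0$ and $f_n\equiv 1$ on $A\setminus O_n$ assemble into $f=\sum_n 2^{-n}f_n$ with $Z(f)=\{p\}$, so $\{p\}$ is a zero-set of $A$, contradicting (a) once more.

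The main obstacle lies in step (a): one must braid together two distinct countable families --- the open sets of $A$ witnessing the hypothetical zero-set, and the clopen neighborhoods of $p$ in $\beta_0X$ separating $p$ from $\upsilon_0X$ --- so that the resulting countable intersection in $\beta_0X$ collapses cleanly to $\{p\}$ with no leakage into $\upsilon_0X$. The stronger separation $(\bigcap_n V_n)\cap\upsilon_0X=\emptyset$ (as opposed to the weaker $(\bigcap_n V_n)\cap X=\emptyset$ literally stated in the Preliminaries) is precisely what makes the splice succeed; once it is in place, Corollary \ref{3.7} delivers the contradiction and the other two parts fall out.
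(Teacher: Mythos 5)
Your proof is correct and follows essentially the same route as the paper: both arguments splice the countable family witnessing the hypothetical zero-set $\{p\}$ in $\beta_0X\setminus\upsilon_0X$ with the countable family of clopen neighborhoods of $p$ separating it from $\upsilon_0X$, so that $\{p\}$ becomes a $G_\delta$-point (equivalently, via Lemma \ref{1.3}(a), a singleton zero-set) of $\beta_0X$ missing $X$, which is forbidden by Corollary \ref{3.7} (the paper cites the underlying cardinality bound of Theorem \ref{3.3}/Proposition \ref{3.8} instead, but that is the same obstruction). Parts (b) and (c) are deduced from (a) exactly as in the paper.
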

\begin{proof}
(a) If $p$ is a zero-set of $\beta_0X\setminus \upsilon_0X$, there exists a sequence $\{U_n: n\in \Bbb{N}\}$ of clopen neighborhoods of $p$ in $\beta_0X$ such that $\{p\}=(\cap_{n=1}^{\infty}U_n)\cap (\beta_0X\setminus \upsilon_0X)$. Also there exists a sequence $\{V_n: n\in \Bbb{N}\}$ of clopen neighborhoods of $p$ in $\beta_0X$ such that $(\cap_{n=1}^{\infty}V_n)\cap  \upsilon_0X=\emptyset$. Hence $\{p\}=(\cap_{n=1}^{\infty}U_n)\cap (\cap_{n=1}^{\infty}V_n)$. Part (a) of Lemma \ref{1.3} implies that there exists $f\in C_c(\beta_0X)$ such that $\{p\}=Z(f)$. But this contradicts with Proposition \ref{3.8}.\\
(b) Since each isolated point is a zero-set, clearly part (a) implies part (b).\\
(c) If for some $p$ the locall weight at $p$ is countable, then $p$ is a $G_{\delta}$-point and hence a zero-set of $\beta_0X\setminus \upsilon_0X$, which contradicts with part (a).
\end{proof}

We recall that a topological space is scattered if each of its nonempty subsets has an isolated point with the relative topology. Lemma \ref{3.2} together with part (b) of Corollary \ref{3.9},  imply the following corollary.
\begin{cor}\label{3.10}
For a zero-dimensional space $X$, if $\beta_0 X$ is scattered then $X$ is pseudocompact.
\end{cor}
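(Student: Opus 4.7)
The plan is to argue by contrapositive, using exactly the two ingredients highlighted in the paragraph just before the corollary. Suppose $X$ is not pseudocompact. Then Lemma \ref{3.2} gives $\beta_0 X \setminus \upsilon_0 X \neq \emptyset$, so this is a nonempty subset of $\beta_0 X$. By part (b) of Corollary \ref{3.9}, the space $\beta_0 X \setminus \upsilon_0 X$ (with its subspace topology from $\beta_0 X$) has no isolated point. But the definition of a scattered space requires every nonempty subset to have an isolated point in the relative topology. Hence $\beta_0 X$ cannot be scattered.

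Concretely, the writeup is just a two-line contrapositive: assume $\beta_0 X$ is scattered and that $X$ is not pseudocompact, apply Lemma \ref{3.2} to obtain a nonempty remainder $\beta_0 X \setminus \upsilon_0 X$, invoke the scattered hypothesis to extract an isolated point of this subspace, and then derive the contradiction with Corollary \ref{3.9}(b). There is no real obstacle here — the content of the statement has been entirely loaded into Corollary \ref{3.9}(b), whose proof in turn rests on Proposition \ref{3.8} (the $2^{2^{\aleph_0}}$ lower bound on the remainder). The only thing to be mildly careful about is that ``isolated point'' in Corollary \ref{3.9}(b) is meant with respect to the subspace topology of $\beta_0 X \setminus \upsilon_0 X$, which is precisely the notion invoked by the definition of scatteredness applied to the subset $\beta_0 X \setminus \upsilon_0 X \subseteq \beta_0 X$; so the two notions match and the contradiction is immediate.
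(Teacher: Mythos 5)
Your proof is correct and follows exactly the route the paper intends: the paper states the corollary as an immediate consequence of Lemma \ref{3.2} and part (b) of Corollary \ref{3.9}, which is precisely the contrapositive argument you spell out. Nothing is missing.
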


In the following, we observe some results about connection between Banaschewski compactification and almost $P$-spaces. We recall that a topological space $X$ is an almost $P$-space if each nonempty $G_{\delta}$-subset of $X$ has nonempty interior, see \cite{L} and \cite{V}.

\begin{prop}\label{3.10}
$\beta_0 X$ is an almost $P$-space if and only if $X$ is pseudocompact and an almost $P$-space.
\end{prop}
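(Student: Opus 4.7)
The plan is to prove each direction separately. For sufficiency, I would assume $X$ pseudocompact and almost $P$, and take an arbitrary nonempty zero-set $Z(F)$ in $\beta_0X$ with $F\in C(\beta_0X)$. Setting $f=F|_X$, I would first show $Z(f)\neq\emptyset$: otherwise $\tfrac{1}{f}\in C(X)$ would be bounded by pseudocompactness, so $|f|$ would be bounded below by some $\varepsilon>0$ on $X$; by denseness of $X$ in $\beta_0X$ this would force $|F|\ge\varepsilon$ on all of $\beta_0X$, contradicting $Z(F)\neq\emptyset$. Then I would apply the almost $P$ hypothesis on $X$ to $Z(f)$ to get a nonempty open $U=W\cap X\subseteq Z(f)$ with $W$ open in $\beta_0X$, and use the chain $W\subseteq cl_{\beta_0X}(W\cap X)\subseteq Z(F)$ (closedness of $Z(F)$ together with density of $X$) to conclude $int_{\beta_0X}Z(F)\neq\emptyset$.

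For necessity, I would assume $\beta_0X$ is almost $P$. To see $X$ is pseudocompact, suppose not; Proposition \ref{2.8} then gives a continuous surjection $f\colon X\to\Bbb{N}$. Following the construction in the proof of Proposition \ref{3.8}, I would extend $\tfrac{1}{f}$ via the universal property of $\beta_0X$ (applied to the compact zero-dimensional space $\{\tfrac{1}{n}:n\in\Bbb{N}\}\cup\{0\}$) to some $G\in C_c(\beta_0X)$ with $Z(G)\neq\emptyset$ but $Z(G)\cap X=\emptyset$. The almost $P$-property of $\beta_0X$ would then yield $int_{\beta_0X}Z(G)\neq\emptyset$, which must meet the dense subset $X$, contradicting $Z(G)\cap X=\emptyset$.

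To show $X$ itself is almost $P$, I would invoke the standard equivalence (in Tychonoff spaces) between the zero-set and $G_{\delta}$ formulations of almost $P$-space, and work with $G_{\delta}$'s. Given a nonempty $G_{\delta}$-set $G=\bigcap_n U_n$ in $X$ with each $U_n$ open in $X$, I would choose open $V_n\subseteq\beta_0X$ with $V_n\cap X=U_n$ and set $V=\bigcap_n V_n$. Then $V\cap X=G\neq\emptyset$, so $V$ is a nonempty $G_{\delta}$ in $\beta_0X$; the hypothesis furnishes a nonempty open $W\subseteq V$ in $\beta_0X$, and then $W\cap X$ is a nonempty open subset of $X$ contained in $G$. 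The main obstacle I expect is the fact that $\beta_0X$ need not equal $\beta X$, so an arbitrary $f\in C(X)$ does not generally extend to $\beta_0X$; this rules out transporting a zero-set of $X$ directly to a zero-set of $\beta_0X$, and motivates using the $G_{\delta}$ formulation in the last step, while the closedness built into the zero-set formulation is exploited in the sufficiency direction.
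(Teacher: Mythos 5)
Your argument is correct, and the two halves relate to the paper's proof differently. The necessity half is essentially the paper's: you exhibit a nonempty $G_\delta$ (in your case the zero-set of the extension $G$ of $1/f$, exactly as built in Proposition \ref{3.8}) of $\beta_0X$ that misses the dense set $X$ and so cannot have nonempty interior, and for the almost-$P$ property of $X$ you lift a $G_\delta$ of $X$ to a $G_\delta$ of $\beta_0X$ and intersect the resulting interior back with $X$; the paper does the same, merely interposing clopen sets $O_n\subseteq V_n$, which your version shows is not needed. The sufficiency half is genuinely different. The paper starts from a nonempty $G_\delta$ $H\supseteq\bigcap_n V_n\ni p$ with $V_n$ clopen in $\beta_0X$ and shows $\bigcap_n(V_n\cap X)\neq\emptyset$ by viewing $p$ as a clopen ultrafilter with the countable intersection property, which is available because pseudocompactness gives $\beta_0X=\upsilon_0X$ (Lemma \ref{3.2}); you instead take a zero-set $Z(F)$ of $\beta_0X$, restrict to $f=F|_X$, and use pseudocompactness directly: if $Z(f)=\emptyset$ then $1/f$ is bounded, so $|F|$ is bounded away from $0$ on the dense set $X$ and hence on $\beta_0X$. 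Your route is more elementary in that it bypasses $\upsilon_0X$ and the clopen-ultrafilter description of points entirely, at the price of working with the zero-set formulation of ``almost $P$'' on $\beta_0X$; since $\beta_0X$ is compact Hausdorff, hence Tychonoff, the equivalence with the $G_\delta$ formulation that you cite does apply there as well (you invoke it explicitly only for $X$, so you should say so also for $\beta_0X$), and with that remark the proof is complete.
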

\begin{proof}
For the necessity, suppose that $\beta_0 X$ is an almost $P$-space and $X$ is not pseudocompact. For $p\in \beta_0 X\setminus \upsilon_0 X$, there exist a sequence of countable clopen subsets of $X$, say $\{U_n: n\in \Bbb{N}\}$, such that $\cap_{n=1}^{\infty}U_n=\emptyset$ and for each $n\in \Bbb{N}$, $p\in cl_{\beta_0 X}U_n$. Note that for each $n\in \Bbb{N}$, $ cl_{\beta_0 X}U_n$ is a clopen subset of $\beta_0 X$. Hence $G=\cap_{n=1}^{\infty} cl_{\beta_0 X}U_n$ is a $G_{\delta}$-subset in $\beta_0 X$. By our hypothesis, $int_{\beta_0 X}G\neq \emptyset$. Therefore $G$ must intersects $X$. But $G\cap X=\cap_{n=1}^{\infty}U_n=\emptyset$, which is a contradiction. Now we show that $X$ is an almost $P$-space. For a nonempty $G_{\delta}$-set $G$, there exists a sequence $\{V_n: n\in \Bbb{N}\}$ of open subsets of $X$ such that $G=\cap_{n=1}^{\infty}V_n$. For $p\in G$, there exists a sequence of clopen sets $\{O_n: n\in \Bbb{N}\}$ such that for each $n\in \Bbb{N}$, $p\in O_n\subseteq V_n$. For each $n\in \Bbb{N}$, $cl_{\beta_0 X}O_n$ is clopen in $\beta_0 X$ and $p\in T=\bigcap_{n=1}^{\infty}\left(cl_{\beta_0 X}O_n\right)$. So there exists a nonempty open set $V$ in $\beta_0 X$ such that $V\subseteq T$. The set $V\cap X$ is nonempty and is contained entirely in $\cap_{n=1}^{\infty}O_n\subseteq G$. Therefore $X$ is an almost $P$-space.\\
 For the sufficiency, let $H$ be a nonempty $G_{\delta}$-subset of $\beta_0 X$. There exist a sequence $\{V_n: n\in \Bbb{N}\}$ of clopen subsets of $\beta_0 X$  and $p\in H$ such that $p\in \cap_{n=1}^{\infty}V_n\subseteq H$. For each $n\in \Bbb{N}$, $V_n\cap X$ is clopen in $X$ and the collection $\Omega=\{V_n\cap X: n\in \Bbb{N}\}$ has finite intersection property. Since $\beta_0 X=\upsilon_0 X$, $\Omega$ is contained in a clopen ultrafilter with countable intersection property. Hence $\bigcap_{n=1}^{\infty}\left(V_n\cap X\right)$ is nonempty and therefore a $G_{\delta}$-subset of $X$. So there exists a neighborhood $O$ in $\beta_0 X$ such that $O\cap X\subseteq \bigcap_{n=1}^{\infty}\left(V_n\cap X\right)$. By taking closure, we observe that $O\subseteq \cap_{n=1}^{\infty}V_n$. Hence $H$ has nonempty interior.
\end{proof}

In the following theorem, we introduce a class of almost $P$-spaces in connection with  Banaschewski compactification of  locally compact and $\Bbb{N}$-compact spaces.

\begin{thm}\label{3.15}
Let $X$ be a locally compact and $\Bbb{N}$-compact space. Then $\beta_0 X\setminus X$ is an almost $P$-space.
\end{thm}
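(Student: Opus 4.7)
The plan is to show every nonempty zero-set of the compact Hausdorff space $\beta_0X\setminus X$ has nonempty interior; since every nonempty $G_\delta$ neighborhood of a point in a compact Hausdorff normal space contains a zero-set neighborhood of the same point, this will make $\beta_0X\setminus X$ an almost $P$-space. Local compactness of $X$ puts $X$ open in $\beta_0X$, so $\beta_0X\setminus X$ is closed and hence compact; being closed in the compact zero-dimensional space $\beta_0X$ it is itself strongly zero-dimensional, so every zero-set of $\beta_0X\setminus X$ is a countable intersection of clopens and these clopens extend to clopens of $\beta_0X$. By Lemma~\ref{1.3}(a), every such zero-set is of the form $Z(f)\cap(\beta_0X\setminus X)$ for some $f\in C_c(\beta_0X)$. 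Fix such a nonempty zero-set with $p\in Z(f)\cap(\beta_0X\setminus X)$. Using $\upsilon_0X=X$ ($\Bbb{N}$-compactness), pick clopens $W_n\ni p$ of $\beta_0X$ with $\bigcap_nW_n\cap X=\emptyset$, and by Lemma~\ref{1.3}(a) produce $g\in C_c(\beta_0X)$ with $Z(g)=\bigcap_n W_n$. Replace $f$ by $\tilde f:=f^2+g^2\in C_c(\beta_0X)$: then $p\in Z(\tilde f)=Z(f)\cap Z(g)\subseteq Z(f)$ and $Z(\tilde f)\cap X=\emptyset$, so $\tilde h:=\tilde f|_X\in C_c(X)$ is a unit, and density of $X$ in $\beta_0X$ plus continuity of $\tilde f$ at $p$ force $1/\tilde h\in C_c(X)$ to be unbounded on $X$.

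Corollary~\ref{2.7} applied to $1/\tilde h$ yields a $C_c$-embedded copy $\{x_n\}$ of $\Bbb{N}$ in $X$ with $\tilde h(x_n)\to 0$. Extending $x_n\mapsto n$ to $K\in C_c(X)$ and thresholding at $r_n\in(n-\tfrac12,n+\tfrac12)\setminus K(X)$ produces pairwise disjoint clopens $M_n:=K^{-1}[r_n,r_{n+1})$ of $X$ with $x_n\in M_n$ and $\bigcup_nM_n$ clopen, which also shows $\{x_n\}$ is closed discrete in $X$. By local compactness and zero-dimensionality, pick a compact clopen neighborhood $C_n$ of $x_n$ in $X$; by continuity of $\tilde f$ together with zero-dimensionality of $\beta_0X$, pick a clopen $P_n$ of $X$ containing $x_n$ on which $\tilde f<\varepsilon_n:=\tilde h(x_n)+1/n$. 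Set $N_n:=M_n\cap C_n\cap P_n$ and $B:=\bigcup_n N_n$. The $N_n$ are pairwise disjoint compact clopens of $X$ with $\varepsilon_n\to 0$, and the standard argument $cl_XB\subseteq\bigcup_n M_n$ combined with $B\cap M_k=N_k$ (closed in the clopen $M_k$) shows that $B$ is clopen in $X$.

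Finally, $B\supseteq\{x_n\}$ is not relatively compact in $X$, so $B^*:=cl_{\beta_0X}B\setminus X$ is nonempty; and since $B$ is clopen in $X$, $cl_{\beta_0X}B$ is clopen in $\beta_0X$, making $B^*$ clopen in $\beta_0X\setminus X$. For any $q\in B^*$ and any $N$, the compact set $\bigcup_{n\le N}N_n\subseteq X$ is closed in $\beta_0X$ and avoids $q\notin X$, so any net from $B$ converging to $q$ eventually lies in $\bigcup_{n>N}N_n$, forcing $\tilde f(q)\le\sup_{n>N}\varepsilon_n$; letting $N\to\infty$ gives $\tilde f(q)=0$. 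Thus $B^*\subseteq Z(\tilde f)\subseteq Z(f)$, exhibiting a nonempty clopen subset of $\beta_0X\setminus X$ inside $Z(f)\cap(\beta_0X\setminus X)$. The main obstacle is the construction in the second paragraph: producing a clopen (not merely open) $B\subseteq X$ whose $\beta_0X$-boundary $B^*$ lies entirely within $Z(\tilde f)$. This requires simultaneously exploiting local compactness (to make the finite unions $\bigcup_{n\le N}N_n$ closed in $\beta_0X$), the $C_c$-embedding of $\{x_n\}$ (to supply the clopen partition $\{M_n\}$ ensuring clopenness of $B$), and continuity of $\tilde f$ (to force $|\tilde f|$ to drop uniformly to $0$ across the $N_n$).
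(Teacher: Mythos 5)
Your proof is correct, and it follows the paper's overall strategy: reduce the given $G_\delta$ of the remainder to a zero-set $Z(F)$ of $\beta_0X$ with $F\in C_c(\beta_0X)$ and $Z(F)\cap X=\emptyset$ (using $\upsilon_0X=X$ and Lemma~\ref{1.3}(a)), then use local compactness to manufacture a family of relatively compact pieces of $X$ escaping to infinity on which $F$ tends to $0$, so that the trace of the closure of their union on $\beta_0X\setminus X$ is a nonempty open subset of $Z(F)$. Where you genuinely diverge is in how openness of that trace is secured. The paper simply takes, for each $i$, an open $W_i\subseteq X$ with compact closure on which $F\le 1/i$ and asserts that $(\beta_0X\setminus X)\cap cl_{\beta_0X}\bigl(\bigcup_i W_i\bigr)$ is open in the remainder --- a claim that is not evident for an arbitrary open union, since the closure of an open set need not trace an open set on a subspace. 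You instead run $F$ through Corollary~\ref{2.7} to extract a $C_c$-embedded copy $\{x_n\}$ of $\Bbb{N}$, use the extension $K$ to build a pairwise disjoint clopen family $\{M_n\}$ with clopen union, and shrink to compact clopen $N_n=M_n\cap C_n\cap P_n$ so that $B=\bigcup_n N_n$ is genuinely clopen in $X$; then $cl_{\beta_0X}B$ is clopen in $\beta_0X$ and the openness of $B^*$ is automatic, while the finite unions $\bigcup_{n\le N}N_n$ being compact forces $B^*\subseteq Z(\tilde f)$. This costs you an extra layer of construction but buys a fully justified version of exactly the step the paper leaves terse. Two small points to tidy: choose $r_n\in\bigl(n-\tfrac12,\,n\bigr)\setminus K(X)$ rather than in $\bigl(n-\tfrac12,n+\tfrac12\bigr)$, so that $r_n<n<r_{n+1}$ and hence $x_n\in M_n=K^{-1}[r_n,r_{n+1})$; and note explicitly that the reduction from $G_\delta$-sets to zero-sets only needs that every nonempty $G_\delta$ \emph{contains} a nonempty zero-set (the intersection of zero-set neighborhoods of $p$ need not itself be a neighborhood, but containment is all the argument uses).
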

\begin{proof}
Let $G$ be a nonempty $G_{\delta}$-set in $\beta_0 X\setminus X$ and $p\in G$. There exists a sequence of open sets in $\beta_0 X$, say $\{V_n: n\in \Bbb{N}\}$, such that $G=\left(\bigcap_{n=1}^{\infty}V_n\right)\cap \left(\beta_0 X\setminus X\right)$. Also there exists a sequence of clopen sets $\{U_n: n\in \Bbb{N}\}$ of $X$ such that $\bigcap_{n=1}^{\infty}U_n=\emptyset$ and $p\in \bigcap_{n=1}^{\infty}cl_{\beta_0 X}U_n$. Note that the $G_{\delta}$-set $\bigcap_{n=1}^{\infty}cl_{\beta_0 X} U_n$ is a subset of $\beta_0 X\setminus X$. Obviously $H=\left(\bigcap_{n=1}^{\infty}cl_{\beta_0 X} U_n\right)\cap\left(\bigcap_{n=1}^{\infty}V_n\right)$ is a $G_{\delta}$-subset of $\beta_0 X$ and $p\in H\subseteq \beta_0 X\setminus X$. One could find a sequence of clopen subsets of $\beta_0 X$, say $\{O_n: n\in \Bbb{N}\}$, such that $p\in \bigcap_{n=1}^{\infty}O_n\subseteq H$. By part (a) of Lemma \ref{1.3}, there exists $F\in C_c(\beta_0 X)$ such that $Z(F)=\bigcap_{n=1}^{\infty}O_n$. Since $X$ is locally compact, for each $i\in \Bbb{N}$, there exists an open set $W_i\subseteq X$ such that $cl_{X}W_i$ is compact and for each $x\in W_i$, $F(x)\leq \frac{1}{i}$. It is enough to show that the set $\left(\beta_0 X\setminus X\right)\cap cl_{\beta_0X}\left(\bigcup_{i\in \Bbb{N} }W_i\right)$ is a subset of $Z(F)$. Consider $t\in\left(\beta_0 X\setminus X\right)\cap cl_{\beta_0X}\left(\bigcup_{i\in \Bbb{N} }W_i\right)$.  Each neighborhood $P$ of $t$ in $\beta_0 X$ intersects infinitely many $W_i$'s. To see this, assume that for a neighborhood $P$ of $t$, there exists $n\in \Bbb{N}$ such that for all $m>n$; $P\cap W_m=\emptyset$. This implies that $t\in cl_{\beta_0X}\left(\bigcup_{i=1 }^{n}W_i\right)$. Each $W_i$ has compact closure in $X$, and hence $cl_{\beta_0X}\left(\bigcup_{i=1}^{n}W_i\right)$ is a compact subset of $X$ which implies that $t\in X$ and this is a contradiction with the choice of $t$. Since each neighborhood $P$ of $t$ in $\beta_0 X$ intersects infinitely many $W_i$'s, the function $F$ must vanishes in $t$. Hence $$T=(\beta_0X\setminus X)\cap\left(cl_{\beta_0X}\left(\cup_{i\in \Bbb{N}}W_i\right)\right)\subseteq Z(F).$$ It is easy to see that $T$ is nonempty,  open in $\beta_0 X\setminus X$ and  $T\subseteq G$, which shows that $G$ has nonempty interior. This completes the proof. 
\end{proof}

Using Corollary \ref{3.9} together with Theorem \ref{3.15}, we could find a class of  spaces whose appearance is in Boolean algebras, see for example \cite{FZ}. We recall that a compact zero-dimensional space X is called a Parovi$\breve{\mbox{c}}$enko space if it has the following properties:\\
(a) X has no isolated points. \\
(b) Nonempty $G_{\delta}$-sets have nonempty interiors.\\
(c) $X$ is an $F$-space (i.e., for each $f\in C(X)$, the subsets $pos(f)$ and $neg(f)$ are completely separated).

The following proposition gives us a large class of Parovi$\breve{\mbox{c}}$enko spaces in Banaschewski compactification of a non pseudocompact zero-dimensional space.

\begin{prop}\label{3.17}
Let $Z$ be a zero-set of $\beta_0X$ such that $Z\cap X=\emptyset$. Then $Z$ is a Parovi$\breve{\mbox{c}}$enko space.
\end{prop}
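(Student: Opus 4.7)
The space $Z$ is closed in the compact zero-dimensional Hausdorff space $\beta_0X$, so it is automatically compact and zero-dimensional; what I need to verify are the three Parovi$\breve{\mbox{c}}$enko properties (a)--(c). For (a), suppose $p\in Z$ were isolated in $Z$. Then $\{p\}=U\cap Z$ for some clopen $U\subseteq\beta_0X$, and writing $Z=Z(F)$ with $F\in C_c(\beta_0X)$, the function $F^2+\chi_{\beta_0X\setminus U}$ lies in $C_c(\beta_0X)$ with zero-set exactly $\{p\}$, so $\{p\}$ is a $G_\delta$-point of $\beta_0X$. Since $Z\cap X=\emptyset$ forces $p\in\beta_0X\setminus X$, this contradicts Corollary \ref{3.7}.

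The plan for (b) and (c) is to realize $Z$ as $\beta_0S\setminus S$ for a suitable locally compact, $\mathbb{N}$-compact space $S$, at which point Theorem \ref{3.15} applied to $S$ yields (b) at once. After replacing $F$ by $F^2$ I may assume $F\geq 0$; then pick a sequence $r_n\downarrow 0$ with $r_n\notin F(\beta_0X)$, so each $K_n:=F^{-1}[r_n,\infty)$ is clopen (hence compact) in $\beta_0X$. Thus $S:=\beta_0X\setminus Z=\{F>0\}=\bigcup_n K_n$ is open in $\beta_0X$ (so locally compact), $\sigma$-compact, zero-dimensional, and contains the dense set $X$. The identification $\beta_0S=\beta_0X$ follows because any continuous $\phi:S\to\{0,1\}$ restricts to a two-valued function on $X$ that extends to $\Phi:\beta_0X\to\{0,1\}$ by the defining property of $\beta_0X$, and $\Phi|_S=\phi$ by density of $X$ in $S$ and Hausdorffness of $\{0,1\}$; hence $Z=\beta_0S\setminus S$. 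Since $S$ is $\sigma$-compact zero-dimensional Hausdorff, it is Lindel\"of, hence realcompact and strongly zero-dimensional, and therefore $\mathbb{N}$-compact. Theorem \ref{3.15} now yields (b).

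Property (c), that $Z$ is an $F$-space, is the main obstacle, as the paper supplies no directly applicable lemma. I plan to adapt the classical theorem that the remainder of a locally compact $\sigma$-compact space is an $F$-space: given disjoint cozero sets $A,B\subseteq Z$, I write them as unions of increasing clopen sequences $A=\bigcup_n(U_n\cap Z)$ and $B=\bigcup_n(V_n\cap Z)$ with $U_n,V_n$ clopen in $\beta_0X$; then I refine inductively by replacing $U_n$ with $U_n\setminus\bigcup_{m\leq n}V_m$ and then $V_n$ with $V_n\setminus\bigcup_{m\leq n}U_m'$, making $U_n'\cap V_m'=\emptyset$ throughout $\beta_0X$ while preserving the traces on $Z$ (the subtracted pieces, being clopen and disjoint from $Z$, lie inside $S$). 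Finally, using the compact-clopen exhaustion $S=\bigcup_n K_n$ together with zero-dimensionality of $\beta_0X$, I splice together a clopen $W\subseteq\beta_0X$ with $\bigcup_n U_n'\subseteq W$ and $W\cap\bigcup_n V_n'=\emptyset$; this forces $\overline{A}\cap\overline{B}\subseteq W\cap(\beta_0X\setminus W)=\emptyset$ inside $Z$, completing (c).
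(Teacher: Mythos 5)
Your parts (a) and (b) are correct. For (a) you argue directly from Corollary \ref{3.7} that an isolated point of $Z$ would be a $G_{\delta}$-point of $\beta_0X$ lying in $\beta_0X\setminus X$; this is in fact slightly more careful than the paper, which invokes part (b) of Corollary \ref{3.9} even though $Z$ may meet $\upsilon_0X\setminus X$. For (b) you follow essentially the paper's route: both proofs pass to the complement $S=\beta_0X\setminus Z$, observe that it is open, locally compact, $\sigma$-compact, Lindel\"of, strongly zero-dimensional and $\Bbb{N}$-compact, identify $Z$ with $\beta_0S\setminus S$, and apply Theorem \ref{3.15}.

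The gap is in (c). The paper disposes of the $F$-space property in one line by citing Theorem 14.27 of \cite{GJ} (namely, $Z=\beta S\setminus S$ with $S$ locally compact and $\sigma$-compact), whereas you attempt to reprove that theorem from scratch, and the decisive step is missing. After your (correct) inductive refinement you have two disjoint open sets $P=\bigcup_nU_n'$ and $Q=\bigcup_nV_n'$, each a countable union of clopen subsets of $\beta_0X$, and you assert that "using the compact-clopen exhaustion \dots I splice together a clopen $W$" with $P\subseteq W$ and $W\cap Q=\emptyset$. That assertion is the entire content of the theorem and cannot be taken for granted: in a compact zero-dimensional space two disjoint $\sigma$-clopen (i.e.\ cozero) sets are separated by a clopen set precisely when they are completely separated, which is the $F$-space property you are trying to establish; since $\beta_0X$ is in general not an $F$-space, no such $W$ exists for arbitrary $P,Q$ of this form. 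Nothing in your construction controls the pieces $U_n'\cap S$ and $V_n'\cap S$, so the closures of $P$ and $Q$ may meet at a point of $Z$ lying outside $\overline{A}\cup\overline{B}$, in which case a clopen $W$ containing all of $P$ and missing all of $Q$ cannot exist. To repair this you must use the exhaustion $S=\bigcup_nK_n$ to trim the $U_n'$ and $V_n'$ inside $S$, the delicate point being that the resulting union stays closed and not merely open; that is exactly the work done in \cite[Theorem 14.27]{GJ}. Since you have already verified every hypothesis of that theorem for $S$ (and $\beta_0S=\beta S$ because $S$ is strongly zero-dimensional), the honest fix is simply to cite it, as the paper does.
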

\begin{proof}
Evidently $Z$ is compact and zero-dimensional. Since $Z$ is a $G_{\delta}$-subset of $\beta_0X$, $W=\beta_0X\setminus Z$ is $\sigma$-compact and therefore strongly zero-dinensional, see \cite[16.17]{GJ}. Thus $\beta_0W=\beta W$. The space $W$ is Lindel\"{o}f and hence is $\Bbb{N}$-compact. We could observe that $Z=\beta_0W\setminus W=\beta W\setminus W$. So by Theorem \ref{3.15}, $Z$ must be an almost $P$-space. Also since $W$ is locally compact and $\sigma$-compact, by Theorem 14.27 of \cite{GJ}, $Z$ is an $F$-space. Part (b) of  Corollary \ref{3.9}, implies that $Z$ contains no isolated point. Hence $Z$ must be a  Parovi$\breve{\mbox{c}}$enko space.
\end{proof}

Let $X$ be zero-dimensional, locally compact space which is not pseudocompact. We close this section by giving a lower bound for the cellularity of the subspaces $\beta_0X\setminus\upsilon_0X$ and $\beta_0X\setminus X$. We recall that the cellularity of a space $Y$, denoted by $c(Y)$, is the smallest cardinal number $\kappa$ for which each pairwise disjoint family of nonempty open sets of $Y$ has $\kappa$ or fewer members. Also we remind that for a zero-dimensional space $X$ and a cardinal number $\kappa$, a partition of $X$ with length $\kappa$ is a family $\{U_i: i\in \Bbb{I}\}$ of pairwise disjoint clopen subsets of $X$ whose union is $X$ and $|\Bbb{I}|=\kappa$. Note that by Proposition \ref{2.8}, every zero-dimensional space which is not pseudocompact, has a partition with length $\aleph_0$. The following theorem which is due to Tarski, is needed for our purpose, see \cite{HO}. 

\begin{thm}\label{3.11}[Tarski]
Let $E$ be an infinite set. Then there is a collection $\Re$ of subsets of $E$ such that $|\Re|=|E|^{\aleph_0}$, $|R|=\aleph_0$ for each $R\in \Re$ and the intersection of any two distinct members of $\Re$ is finite.
\end{thm}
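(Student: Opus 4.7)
The plan is to construct the collection $\Re$ inside $E^{<\omega}$, the set of all finite sequences of elements of $E$, and then transfer it to $E$ via a bijection. This reduction is legitimate because for infinite $E$ one has $|E^{<\omega}|=\sum_{n\in\Bbb{N}}|E|^n=\aleph_0\cdot|E|=|E|$, so a bijection $\psi:E^{<\omega}\to E$ exists, and under $\psi$ both the cardinalities of members and of pairwise intersections are preserved.

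For each function $g:\Bbb{N}\to E$, form the ``branch''
\[
R_g=\{(g(1),g(2),\ldots,g(n)):n\in\Bbb{N}\}\subseteq E^{<\omega}.
\]
Since sequences of different lengths are distinct elements of $E^{<\omega}$, each $R_g$ has cardinality exactly $\aleph_0$. For almost-disjointness, take distinct $g_1\neq g_2$ and let $n_0$ be the least index with $g_1(n_0)\neq g_2(n_0)$; then any sequence in $R_{g_1}$ of length $\geq n_0$ has $g_1(n_0)$ in coordinate $n_0$, while every sequence in $R_{g_2}$ of length $\geq n_0$ has $g_2(n_0)$ there, and sequences of length $<n_0$ agree by minimality of $n_0$. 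Hence
\[
R_{g_1}\cap R_{g_2}\subseteq\{(g_1(1),\ldots,g_1(n)):n<n_0\},
\]
a set with at most $n_0-1$ elements. In particular, the assignment $g\mapsto R_g$ is injective, because if $R_{g_1}=R_{g_2}$ were to hold for $g_1\neq g_2$ then this common set would be simultaneously infinite and finite.

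Taking $\Re:=\{\psi(R_g):g\in E^{\Bbb{N}}\}$ therefore yields a family of subsets of $E$ with $|\Re|=|E|^{\aleph_0}$, each of cardinality $\aleph_0$ and pairwise intersecting in a finite set, as required. There is no genuine technical obstacle here; the single real idea is to realize $E$ as a set of finite sequences so that the standard tree-branch construction becomes available. Once the identification $|E^{<\omega}|=|E|$ is in hand, the verifications of cardinality and almost-disjointness reduce to unwinding the definitions.
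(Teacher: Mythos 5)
Your proof is correct. Note that the paper itself offers no proof of this statement---it simply cites Hodel's Handbook chapter---so there is nothing to compare against; your argument is the standard one: realize $E$ as the tree $E^{<\omega}$ of finite sequences (using $|E^{<\omega}|=\aleph_0\cdot|E|=|E|$, which requires choice but is assumed throughout), take the branches $R_g$ determined by functions $g\in E^{\Bbb{N}}$, and observe that two distinct branches share only the initial segments below the first coordinate where they disagree. The only cosmetic point is that $\psi(R_g)$ should be read as the image $\psi[R_g]$; since $\psi$ is a bijection it preserves cardinalities of sets and of pairwise intersections, so the transfer back to $E$ is immediate.
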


\begin{thm}\label{3.11}
Let $X$ be a zero-dimensional and locally compact space which is not pseudocompact. If $X$ has a partition of length $\kappa$, then the cellularity of  subspaces $\beta_0X\setminus \upsilon_0X$ and $\beta_0X\setminus X$ are at least $\kappa^{\aleph_0}$.
\end{thm}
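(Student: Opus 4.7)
The strategy is to construct, for each member of a suitably chosen family $\Re$ of countable subsets of the index set of the given partition, a clopen subset of $X$ whose closure in $\beta_0 X$ has a nonempty trace in $\beta_0 X\setminus \upsilon_0 X$, and so that these traces are pairwise disjoint for distinct members of $\Re$. Since $\beta_0 X\setminus \upsilon_0 X\subseteq \beta_0 X\setminus X$, the same family will witness the cellularity lower bound for both remainders simultaneously.

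First I would fix a clopen partition $\{U_i:i\in I\}$ of $X$ with $|I|=\kappa$ and use local compactness together with zero-dimensionality to replace each $U_i$ by a nonempty compact clopen subset $K_i\subseteq U_i$. This replacement is the essential use of local compactness: it ensures that any finite union of the $K_i$'s is compact, and hence closed in $\beta_0 X$. Applying Tarski's theorem to $I$, one obtains a family $\Re$ of countably infinite subsets of $I$ with $|\Re|=\kappa^{\aleph_0}$ such that any two distinct members of $\Re$ meet in a finite set. For each $R\in \Re$, set $W_R=\bigcup_{i\in R}K_i$. Since the $K_i$ sit in pairwise disjoint clopen cells $U_i$, a short direct argument shows $W_R$ is clopen in $X$, and therefore $cl_{\beta_0 X}W_R$ is clopen in $\beta_0 X$ by the two-embedding property of $X$ in $\beta_0 X$.

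The next step is to check that each $cl_{\beta_0 X}W_R$ reaches into the remainder $\beta_0 X\setminus \upsilon_0 X$. Enumerating $R=\{i_{R,n}:n\in \Bbb{N}\}$, the map $W_R\to \Bbb{N}$ sending $K_{i_{R,n}}$ to $n$ is a continuous surjection, so by Proposition \ref{2.8} the clopen subset $W_R$ is not pseudocompact. Writing $W_R=X\setminus Z(\chi_{_{X\setminus W_R}})$ with $\chi_{_{X\setminus W_R}}\in C_c(X)$, the contrapositive of Theorem \ref{2.9} tells me $W_R$ is not relatively pseudocompact with respect to $C_c(X)$, and Proposition \ref{2.7.5} then forces $cl_{\beta_0 X}W_R\not\subseteq \upsilon_0 X$. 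Hence $cl_{\beta_0 X}W_R\setminus \upsilon_0 X$ is a nonempty open subset of $\beta_0 X\setminus \upsilon_0 X$, and it is contained in the nonempty open set $cl_{\beta_0 X}W_R\setminus X$ of $\beta_0 X\setminus X$.

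The hard step, and the one I expect to be the main obstacle, is pairwise disjointness. For distinct $R,R'\in \Re$ let $F=\bigcup_{i\in R\cap R'}K_i$; as $R\cap R'$ is finite and each $K_i$ is compact, $F$ is compact, hence closed in $\beta_0 X$ and entirely contained in $X$. The clopen subsets $W_R\setminus F$ and $W_{R'}\setminus F$ of $X$ are disjoint, so the two-embedding of $X$ in $\beta_0 X$ separates their $\beta_0 X$-closures, giving $cl_{\beta_0 X}(W_R\setminus F)\cap cl_{\beta_0 X}(W_{R'}\setminus F)=\emptyset$. Combining this with $F\subseteq X$ yields $cl_{\beta_0 X}W_R\cap cl_{\beta_0 X}W_{R'}\subseteq X$, so the sets $cl_{\beta_0 X}W_R\setminus \upsilon_0 X$ (respectively $cl_{\beta_0 X}W_R\setminus X$) form a pairwise disjoint family of size $\kappa^{\aleph_0}$ of nonempty open subsets of $\beta_0 X\setminus \upsilon_0 X$ (respectively $\beta_0 X\setminus X$). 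The argument would collapse if one tried to work directly with $\bigcup_{i\in R}U_i$ in place of $W_R$, because $\bigcup_{i\in R\cap R'}U_i$ need not be closed in $\beta_0 X$ and the closures could overlap in the remainder; passing to the compact clopen refinements $K_i$ is precisely where local compactness is indispensable.
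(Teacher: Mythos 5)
Your proposal is correct and follows essentially the same route as the paper: refine the partition to compact clopen sets $K_i$ using local compactness, apply Tarski's theorem to get an almost disjoint family of size $\kappa^{\aleph_0}$, and observe that the $\beta_0X$-closures of the corresponding unions meet the remainder and intersect pairwise only inside $X$ because finite subunions are compact. The only differences are cosmetic (the paper verifies nonemptiness by picking a point in each $K_i$ and taking a cluster point rather than via Theorem \ref{2.9} and Proposition \ref{2.7.5}, and your characteristic function should be $\chi_{_{W_R}}$, whose cozero-set is $W_R$, rather than $\chi_{_{X\setminus W_R}}$).
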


\begin{proof}
We just prove that the cellularity of $\beta_0X\setminus \upsilon_0X$ is at least $\kappa^{\aleph_0}$. The second one can be derived similarly. Let $\{U_i: i\in \Bbb{I}\}$ be a partition of $X$ with $|\Bbb{I}|= \kappa$. For each $i\in \Bbb{I}$, choose a nonempty clopen and compact subset $W_i\subseteq U_i$. Note that $\{cl_{\upsilon_0X}U_i: i\in \Bbb{I}\}$ is a partition of length $\kappa$ for $\upsilon_0X$ and for each $i\in \Bbb{I}$, $cl_{\upsilon_0X}W_i=W_i$ is clopen in $\upsilon_0X$. For each subset $J\subseteq \Bbb{I}$, define $$A(J)=(\beta_0X\setminus \upsilon_0X)\cap \left(cl_{\beta_0X}(\bigcup_{i\in J}W_i)\right).$$ We remind that since for each $J\subseteq \Bbb{I}$, $\{W_i: i\in J\}$ is a locally finite family of clopen subsets of $\upsilon_0X$, then $\bigcup_{i\in J}W_i$ is clopen in $\upsilon_0X$ (see \cite[Theorem 1.1.11]{E}) and hence $cl_{\beta_0X}(\bigcup_{i\in J}W_i)$ is clopen in $\beta_0X$. This implies that for each subset $J\subseteq \Bbb{I}$, $A(J)$ is clopen in $\beta_0X\setminus \upsilon_0X$. It is clear that for each two subsets $J_1, J_2$ of $\Bbb{I}$, $A(J_1)\cap A(J_2)=A(J_1\cap J_2)$. For a subset $J\subseteq \Bbb{I}$, we claim that $A(J)=\emptyset$ if and only if $J$ is finite. For, if $J\subseteq \Bbb{I}$ is finite, then $cl_{\beta_0X}(\bigcup_{i\in J}W_i)=\bigcup_{i\in J}W_i$ and hence $A(J)=\emptyset$. For the converse, if $J\subseteq \Bbb{I}$ is infinite, then for each $j\in J$, choose $x_j\in W_j$. Clearly the subset $B=\{x_j: j\in J\}$ is a closed and discrete subset of $\upsilon_0X$. Since $\beta_0X$ is compact, the subset $B$ has a cluster point $p$ in $\beta_0X\setminus \upsilon_0X$. This implies that $A(J)\neq \emptyset$.\\
Now apply Theorem \ref{3.11} to find a collection $\Upsilon$ of $\kappa^{\aleph_0}$ infinite subsets of $\Bbb{I}$ such that any two members of $\Upsilon$ have finite intersection. Define $\mathcal{T}=\{A(J): J\in \Upsilon\}$. Evidently $\mathcal{T}$ contains $\kappa^{\aleph_0}$ pairwise disjoint clopen subsets of $\beta_0X\setminus \upsilon_0X$. This implies that $c(\beta_0X\setminus \upsilon_0X)\geq \kappa^{\aleph_0}$.
\end{proof}

By applying Proposition \ref{2.8} and Theorem \ref{3.11}, the following corollary is immediate.

\begin{cor}\label{3.12}
Let $X$ be a zero-dimensional, locally compact space which is not pseudocompact. Then the cellularity of spaces $\beta_0X\setminus \upsilon_0X$ and $\beta_0X\setminus X$ are at least $2^{\aleph_0}$.
\end{cor}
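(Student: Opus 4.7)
The plan is to combine Proposition \ref{2.8} with Theorem \ref{3.11} in essentially the most direct possible way. Since $X$ is a zero-dimensional space which is not pseudocompact, Proposition \ref{2.8} produces a continuous surjection $f:X\rightarrow \Bbb{N}$. Setting $U_n=f^{-1}(n)$ for each $n\in \Bbb{N}$ gives a pairwise disjoint family of nonempty clopen subsets of $X$ whose union is $X$, that is, a partition of $X$ of length $\aleph_0$.

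With this partition in hand, Theorem \ref{3.11} immediately applies with $\kappa=\aleph_0$. It tells us that
\[
c(\beta_0X\setminus \upsilon_0X)\geq \aleph_0^{\aleph_0}\quad\text{and}\quad c(\beta_0X\setminus X)\geq \aleph_0^{\aleph_0}.
\]
Since $\aleph_0^{\aleph_0}=2^{\aleph_0}$ (by a standard cardinal arithmetic sandwich: $2^{\aleph_0}\leq \aleph_0^{\aleph_0}\leq (2^{\aleph_0})^{\aleph_0}=2^{\aleph_0\cdot\aleph_0}=2^{\aleph_0}$), the conclusion follows.

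There is no real obstacle here, since both of the required ingredients are already established earlier in the paper; the only point worth writing down carefully is that the sets $f^{-1}(n)$ are clopen (which is automatic because $\Bbb{N}$ is discrete, so each $\{n\}$ is clopen and $f$ is continuous) and that they are nonempty (which follows from surjectivity). Local compactness of $X$ is needed only so that the hypotheses of Theorem \ref{3.11} are met; we do not need to invoke it again.
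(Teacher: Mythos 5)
Your proof is correct and follows exactly the route the paper intends: the paper states the corollary is immediate from Proposition \ref{2.8} (which yields a partition of length $\aleph_0$ via the fibers of a continuous surjection onto $\Bbb{N}$) together with Theorem \ref{3.11}, and the identity $\aleph_0^{\aleph_0}=2^{\aleph_0}$ closes the argument just as you wrote. Nothing is missing.
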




\begin{thebibliography}{10}






\bibitem{BKM} P. Bhattacharjee, M. L. Knox,  W. W. McGovern, {\it The classical ring of quotients of $C_c(X)$ }, Appl. Gen. Topol. 15, no. 2(2014), 147-154


\bibitem{E} R. Engelking , {\it General topology}, PWN-Polish
Sci. Publ., 1977.

\bibitem{EM} R. Engelking, S. Mrowka, {\it On $E$-compact spaces }, Bull.
Acad. Polon. Sci, 6 (1958), 429-436.

\bibitem{FZ}  R. Frankiewicz, P. Zbierski, {\it  Hausdorff gaps and limits}, Studies in logic and the foundations of mathematics, 132, North- Holland, (1994).

\bibitem{GKN} M. Ghadermazi, O. A. S. Karamzadeh, M. Namdari, {\it
On the functionally countable subalgebra of $C(X)$}, Rend. Sem. Mat. Univ. Padova, 129 (2013), 47-69.

\bibitem{GJ}  L. Gillman, M. Jerison, {\it Rings of Continuous
Functions}, Springer-Verlag, 1976.

\bibitem{HKM} Hager A.,  Kimber C.,  McGovern W. W., {\it Unique $a$-closure for some $l$-groups of rational valued functions }, Czech. Math. J. 55 (2005), 409-421.

\bibitem{HO} R. E. Hodel, Jr., {\it  Cardinal functions $I$}, in: K. Kunen and J. E. Vaughan (eds.), Hand-
book of Set-Theoretic Topology, North-Holland, Amsterdam, (1984), 1-61.


\bibitem{Man3} D. G. Johenson, M. Mandelker, {\it Functions with pseudocompact support }, General topology appl, 3(1973) 331-338

\bibitem{L} R. Levy, {\it  Almost $P$-spaces }, Canad. J. Math. 2 (1977), 284-288.


\bibitem{Man1} M. Mandelker, {\it Round $z$-filters and round subsets of $\beta X$ }, Israel. J. Math. 7 (1969), 1-8. 

\bibitem{Man2} M. Mandelker, {\it Supports of continuous functions }, Trans. Amer. Math. Soc. 156 (1971), 73-83. 

\bibitem{M1} S. Mrowka, {\it Structures of continuous functions. III, Real and lattices of integer-valued continuous functions }, Vehr.
Nederl. Akad. Weten., Sect. I, 68 (1965), 74-82. 

\bibitem{M2} S. Mrowka, S. D. Shore, {\it Structures of continuous functions. V, On homomorphisms of structures of continuous functions with zero-dimensional compact domain }, Vehr. Nederl. Akad. Weten., Sect. I, 68 (1965), 92-94. 

\bibitem{M3} S. Mrowka, {\it On $E$-compact spaces. II }, Bull.
Acad. Polon. Sci, 14 (1966), 597-605.

\bibitem{M4} S. Mrowka, {\it Further results on $E$-compact spaces. I }, Acta.
Math. Hung. 120 (1968), 161-185. 

\bibitem{M5} S. Mrowka, {\it Structures of continuous functions. I }, Acta.
Math. Hung. 21(3-4) (1970), 239-259.



\bibitem{PI} R. S. Pierce, {\it Rings of integer-valued continuous functions }, Trans.
Amer. Math. Soc., 100 (1961), 371-394.



\bibitem{PW} J. R. Porter, R. G. Woods, {\it Extensions and
Absolutes of Hausdorff spaces}, Springer-Verlag, 1988.

\bibitem{V} A. I. Veksler, {\it  $P'$-points, $P'$-sets, $P'$-spaces. A new class of order-continuous measures
and functionals },  Sov. Math. Dokl. 14 (1973), 1445-1450.


\end{thebibliography}
\end{document}